\numberwithin{equation}{section}
\begin{document}
\title{Multilinear singular and fractional integral operators on weighted Morrey spaces}
\author{Hua Wang \\
Department of Mathematics,\\
Zhejiang University, Hangzhou 310027, P. R. China\\
E-mail address: wanghua@pku.edu.cn.\\
\and Wentan Yi \\
Department of Applied Mathematics,\\
Zhengzhou Information Science and Technology Institute,\\
Zhengzhou 450002, P. R. China\\
E-mail address: nlwt89@sina.com.}
\date{}
\maketitle

\begin{abstract}
In this paper, we will study the boundedness properties of multilinear Calder\'on--Zygmund operators and multilinear fractional integrals on products of weighted Morrey spaces with multiple weights.\\
MSC(2010): 42B20; 42B35\\
Keywords: Multilinear Calder\'on--Zygmund operators; multilinear fractional integrals; weighted Morrey spaces; multiple weights
\end{abstract}

\section{Introduction and main results}

Multinear Calder\'on--Zygmund theory is a natural generalization of the linear case. The initial work on the class of multilinear Calder\'on--Zygmund operators was done by Coifman and Meyer in \cite{coifman}, and was later systematically studied by Grafakos and Torres in \cite{grafakos2,grafakos3,grafakos4}. Let $\mathbb R^n$ be the $n$-dimensional Euclidean space and $(\mathbb R^n)^m=\mathbb R^n\times\cdots\times\mathbb R^n$ be the $m$-fold product space ($m\in\mathbb N$). We denote by $\mathscr S(\mathbb R^n)$ the space of all Schwartz functions on $\mathbb R^n$ and by $\mathscr S'(\mathbb R^n)$ its dual space, the set of all tempered distributions on $\mathbb R^n$. Let $m\ge2$ and $T$ be an $m$-linear operator initially defined on the $m$-fold product of Schwartz spaces and taking values into the space of tempered distributions,
\begin{equation*}
T:\mathscr S(\mathbb R^n)\times\cdots\times\mathscr S(\mathbb R^n)\to\mathscr S'(\mathbb R^n).
\end{equation*}
Following \cite{grafakos2}, for given $\vec{f}=(f_1,\ldots,f_m)$, we say that $T$ is an $m$-linear Calder\'on--Zygmund operator if for some $q_1,\ldots,q_m\in[1,\infty)$ and $q\in(0,\infty)$ with $1/q=\sum_{k=1}^m 1/{q_k}$, it extends to a bounded multilinear operator from $L^{q_1}(\mathbb R^n)\times\cdots\times L^{q_m}(\mathbb R^n)$ into $L^q(\mathbb R^n)$, and if there exists a kernel function $K(x,y_1,\ldots,y_m)$ in the class $m$-$CZK(A,\varepsilon)$, defined away from the diagonal $x=y_1=\cdots=y_m$ in $(\mathbb R^n)^{m+1}$ such that
\begin{equation}
T(\vec{f})(x)=T(f_1,\ldots,f_m)(x)=\int_{(\mathbb R^n)^m}K(x,y_1,\ldots,y_m)
f_1(y_1)\cdots f_m(y_m)\,dy_1\cdots dy_m,
\end{equation}
whenever $f_1,\ldots,f_m\in \mathscr S(\mathbb R^n)$ and $x\notin \cap_{k=1}^m$ supp\,$f_k$. We say that $K(x,y_1,\ldots,y_m)$ is a kernel in the class $m$-$CZK(A,\varepsilon)$, if it satisfies the size condition
\begin{equation*}
\big|K(x,y_1,\ldots,y_m)\big|\le\frac{A}{(|x-y_1|+\cdots+|x-y_m|)^{mn}},
\end{equation*}
for some $A>0$ and all $(x,y_1,\ldots,y_m)\in(\mathbb R^n)^{m+1}$ with $x\neq y_k$ for some $1\le k\le m$. Moreover, for some $\varepsilon>0$, it satisfies the regularity condition that
\begin{equation*}
\big|K(x,y_1,\ldots,y_m)-K(x',y_1,\ldots,y_m)\big|\le
\frac{A\cdot|x-x'|^\varepsilon}{(|x-y_1|+\cdots+|x-y_m|)^{mn+\varepsilon}}
\end{equation*}
whenever $|x-x'|\le\frac12 \max_{1\le k\le m}|x-y_k|$, and also that for each fixed $k$ with $1\le k\le m$,
\begin{equation*}
\big|K(x,y_1,\ldots,y_k,\ldots,y_m)-K(x,y_1,\ldots,y'_k,\ldots,y_m)\big|\le
\frac{A\cdot|y_k-y'_k|^\varepsilon}{(|x-y_1|+\cdots+|x-y_m|)^{mn+\varepsilon}}
\end{equation*}
whenever $|y_k-y'_k|\le\frac12 \max_{1\le i\le m}|x-y_i|$. In recent years, many authors have been interested in studying the boundedness of these operators on function spaces, see e.g.\cite{grafakos5,hu,li1,li2}. In 2009, the weighted strong and weak type estimates of multilinear Calder\'on--Zygmund singular integral operators were established in \cite{lerner} by Lerner et al. New more refined multilinear maximal function was defined and used in \cite{lerner} to characterize the class of multiple $A_{\vec{P}}$ weights.

\newtheorem*{thma}{Theorem A}

\begin{thma}[\cite{lerner}]
Let $m\ge2$ and $T$ be an $m$-linear Calder\'on--Zygmund operator. If $p_1,\ldots,p_m\in(1,\infty)$ and $p\in(0,\infty)$ with $1/p=\sum_{k=1}^m 1/{p_k}$, and $\vec{w}=(w_1,\ldots,w_m)$ satisfy the $A_{\vec{P}}$ condition, then there exists a constant $C>0$ independent of $\vec{f}=(f_1,\ldots,f_m)$ such that
\begin{equation*}
\big\|T(\vec{f})\big\|_{L^p(\nu_{\vec{w}})}\le C\prod_{i=1}^m\big\|f_i\big\|_{L^{p_i}(w_i)},
\end{equation*}
where $\nu_{\vec{w}}=\prod_{i=1}^m w_i^{p/{p_i}}$.
\end{thma}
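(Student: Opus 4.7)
The plan is to follow the three-step strategy developed by Lerner et al.\ in \cite{lerner}. The key auxiliary object is the multi(sub)linear maximal operator
\begin{equation*}
\mathcal{M}(\vec{f})(x):=\sup_{Q\ni x}\prod_{i=1}^{m}\frac{1}{|Q|}\int_{Q}|f_i(y_i)|\,dy_i,
\end{equation*}
whose weighted boundedness precisely characterizes the class $A_{\vec{P}}$; in particular, $\mathcal{M}$ maps $L^{p_1}(w_1)\times\cdots\times L^{p_m}(w_m)$ into $L^{p}(\nu_{\vec{w}})$ when $\vec{w}\in A_{\vec{P}}$. I would take this characterization as a given (it is one of the main results of \cite{lerner}) and reduce the theorem to controlling $T$ pointwise by $\mathcal{M}$ in a sharp-maximal sense.

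The core analytic step is the pointwise estimate
\begin{equation*}
M_{\delta}^{\#}\bigl(T(\vec{f})\bigr)(x)\le C\,\mathcal{M}(\vec{f})(x),\qquad 0<\delta<1/m,
\end{equation*}
where $M_{\delta}^{\#}(g)=M^{\#}(|g|^{\delta})^{1/\delta}$ and $M^{\#}$ is the Fefferman--Stein sharp maximal function. To prove this I would fix a cube $Q\ni x$, decompose $f_i=f_i^{0}+f_i^{\infty}$ with $f_i^{0}=f_i\chi_{Q^{\ast}}$ for a suitable enlargement $Q^{\ast}$ of $Q$, and expand $\prod_i f_i$ as a sum of $2^{m}$ products indexed by $\alpha\in\{0,\infty\}^{m}$. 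The fully local term $\prod_{i}f_i^{0}$ is handled by Kolmogorov's inequality combined with the multilinear weak-type endpoint estimate for $T$ (a consequence of the $m$-$CZK(A,\varepsilon)$ hypothesis). For each of the remaining $2^{m}-1$ mixed terms, I would compare $T(\vec{f}^{\alpha})(z)$ with its value at the center of $Q$ by invoking the regularity of $K$ in the appropriate variable, then sum over the annular shells $2^{j+1}Q\setminus 2^{j}Q$; the $\varepsilon$-smoothness of $K$ provides geometric decay in $j$, while the product structure of the kernel bounds yields precisely one copy of $\mathcal{M}(\vec{f})(x)$ after using H\"older on the localized factors.

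With these ingredients in hand I would conclude via the weighted Fefferman--Stein inequality: since $\vec{w}\in A_{\vec{P}}$ implies $\nu_{\vec{w}}\in A_{\infty}$, one has
\begin{equation*}
\big\|M_{\delta}g\big\|_{L^{p}(\nu_{\vec{w}})}\le C\big\|M_{\delta}^{\#}g\big\|_{L^{p}(\nu_{\vec{w}})}
\end{equation*}
whenever the left-hand side is finite. Applying this to $g=T(\vec{f})$ (after first restricting to a dense subclass such as $f_i\in L^{\infty}_{c}$ to guarantee finiteness), using $|T(\vec{f})|\le M_{\delta}(T(\vec{f}))$ a.e., and chaining with the sharp-maximal bound and the weighted estimate for $\mathcal{M}$ yields
\begin{equation*}
\big\|T(\vec{f})\big\|_{L^{p}(\nu_{\vec{w}})}\le C\big\|\mathcal{M}(\vec{f})\big\|_{L^{p}(\nu_{\vec{w}})}\le C\prod_{i=1}^{m}\|f_i\|_{L^{p_i}(w_i)},
\end{equation*}
and a density argument removes the initial restriction.

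The main obstacle is the mixed-term analysis inside the sharp-maximal estimate: when some coordinates $\alpha_i=0$ and others $\alpha_i=\infty$, the usable smoothness of $K$ lies in a variable whose integration region is unbounded, so one must delicately balance the trivial size bound on the localized factors against the smoothness-plus-decay bound on the far factors, ensuring that exactly one copy of $\mathcal{M}(\vec{f})(x)$ is extracted and that the resulting series over annuli still converges thanks to the $\varepsilon$-regularity of the kernel.
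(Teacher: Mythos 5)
The paper does not prove Theorem~A; it is quoted verbatim from the cited work of Lerner, Ombrosi, P\'erez, Torres and Trujillo-Gonz\'alez and used as a black box in the proofs of Theorems~1.1--1.4. Your outline --- the weighted bound for the multilinear maximal operator $\mathcal{M}$ characterizing $A_{\vec{P}}$, the pointwise estimate $M_{\delta}^{\#}(T(\vec f))\le C\,\mathcal{M}(\vec f)$ for $0<\delta<1/m$ via the local/far decomposition with Kolmogorov's inequality and kernel regularity, and the weighted Fefferman--Stein inequality with the finiteness/density caveat --- is precisely the argument of that reference, so your proposal is correct and takes the same route as the original source.
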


\newtheorem*{thmb}{Theorem B}

\begin{thmb}[\cite{lerner}]
Let $m\ge2$ and $T$ be an $m$-linear Calder\'on--Zygmund operator. If $p_1,\ldots,p_m\in[1,\infty)$, $\min\{p_1,\ldots,p_m\}=1$ and $p\in(0,\infty)$ with $1/p=\sum_{k=1}^m 1/{p_k}$, and $\vec{w}=(w_1,\ldots,w_m)$ satisfy the $A_{\vec{P}}$ condition, then there exists a constant $C>0$ independent of $\vec{f}=(f_1,\ldots,f_m)$ such that
\begin{equation*}
\big\|T(\vec{f})\big\|_{WL^p(\nu_{\vec{w}})}\le C\prod_{i=1}^m\big\|f_i\big\|_{L^{p_i}(w_i)},
\end{equation*}
where $\nu_{\vec{w}}=\prod_{i=1}^m w_i^{p/{p_i}}$.
\end{thmb}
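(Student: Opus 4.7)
The plan is to adapt the classical linear endpoint Calder\'on--Zygmund argument to the multilinear, multi-weight setting, invoking Theorem~A on the diagonal ``all good'' piece and exploiting the kernel smoothness to dispose of the ``bad'' pieces. After relabelling I may assume $p_1=\cdots=p_\ell=1$ and $p_{\ell+1},\ldots,p_m>1$ for some $1\le\ell\le m$, and after normalizing $\prod_{i=1}^m\|f_i\|_{L^{p_i}(w_i)}=1$ the task reduces to proving $\nu_{\vec{w}}\bigl(\{x:|T(\vec{f})(x)|>\alpha\}\bigr)\le C\alpha^{-p}$ for every $\alpha>0$.

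Fix $\alpha>0$. For each index $i$ with $p_i=1$ I would perform a weighted Calder\'on--Zygmund decomposition of $f_i$ at height roughly $\alpha^{1/m}$ relative to $w_i$, producing $f_i=g_i+b_i$ with $b_i=\sum_j b_{i,j}$, $\mathrm{supp}\,b_{i,j}\subset Q_{i,j}$, $\int b_{i,j}=0$, and the usual $L^\infty$ and $L^1$ size controls; for $i$ with $p_i>1$ I simply take $g_i=f_i$ and $b_i=0$. Multilinearity expands $T(\vec{f})$ as a sum over $2^\ell$ choices $\sigma\in\{g,b\}^\ell$ of terms of the form $T(h_1^\sigma,\ldots,h_m^\sigma)$. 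The pure-good term is handled by Theorem~A applied at a slightly shifted exponent tuple (raising each $p_i=1$ to some $\tilde p_i>1$ is permissible because $g_i$ is bounded and still controlled in $L^{p_i}(w_i)$), after which Chebyshev's inequality converts the strong bound into the desired level-set estimate.

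For each term involving at least one bad factor, I would remove from $\mathbb R^n$ a uniformly dilated enlargement $Q_{i,j}^*$ of every bad cube in every decomposed coordinate. The $\nu_{\vec{w}}$-measure of this exceptional set is controlled via the characterization of $A_{\vec{P}}$ in terms of the weak-type boundedness of the multilinear maximal function $\mathcal{M}$ from $L^{p_1}(w_1)\times\cdots\times L^{p_m}(w_m)$ into $L^{p,\infty}(\nu_{\vec{w}})$ established in \cite{lerner}. Outside this set I would exploit $\int b_{i,j}=0$ to insert $K(x,y_1,\ldots,c_{Q_{i,j}},\ldots,y_m)$ in each integral and apply the $\varepsilon$-regularity of $K$ in the $y_i$ variable; the resulting gain $|y_i-c_{Q_{i,j}}|^\varepsilon/\mathrm{dist}(x,Q_{i,j})^{mn+\varepsilon}$ is pointwise summable over $j$ and bounds the integrand by a product of single-variable maximal averages dominated by $\mathcal{M}(\vec{f})$.

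The main obstacle will be the bookkeeping needed to calibrate the thresholds in the several Calder\'on--Zygmund decompositions so that the total excluded set has $\nu_{\vec{w}}$-measure at most $C\alpha^{-p}$ while still letting the non-decomposed coordinates ($p_i>1$) interact correctly with the $A_{\vec{P}}$ condition. This is the step where the product weight $\nu_{\vec{w}}=\prod_i w_i^{p/p_i}$ and the multilinear maximal function $\mathcal{M}$ together replace the single Hardy--Littlewood maximal operator of the classical linear argument, and it is precisely where the finer structure of multiple $A_{\vec{P}}$ weights developed in \cite{lerner} becomes essential.
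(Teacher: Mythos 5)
Theorem~B is not proved in this paper at all: it is quoted from Lerner--Ombrosi--P\'erez--Torres--Trujillo-Gonz\'alez \cite{lerner} and used as a black box in the proof of Theorem~1.2, so there is no in-paper argument to compare against. The actual proof in \cite{lerner} does not run through a Calder\'on--Zygmund decomposition of the $f_i$; it combines the pointwise sharp-function estimate $M^{\#}_{\delta}\bigl(T(\vec f)\bigr)(x)\le C\,\mathcal M(\vec f)(x)$ for small $\delta>0$, the Fefferman--Stein type inequality $\|T(\vec f)\|_{WL^{p}(w)}\le C\|\mathcal M(\vec f)\|_{WL^{p}(w)}$ valid for $w\in A_\infty$ (applied with $w=\nu_{\vec w}\in A_{mp}\subset A_\infty$, cf.\ Lemma~2.4), and the weak-type boundedness of the multilinear maximal function $\mathcal M$ from $\prod_i L^{p_i}(w_i)$ into $WL^{p}(\nu_{\vec w})$, which characterizes $A_{\vec P}$. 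Your sketch instead attempts to transplant the Grafakos--Torres unweighted endpoint argument to the multiple-weight setting, and several steps do not close.

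First, the weighted Calder\'on--Zygmund decomposition of $f_i$ ``relative to $w_i$'' requires $w_i$ to be doubling (for the stopping-time selection, for the $L^\infty$ bound on $g_i$ via differentiation of $w_i\,dx$, and for measuring the dilates $Q_{i,j}^{*}$); under $A_{\vec P}$ alone the individual weights need not be doubling --- Lemma~2.4 only gives $w_i^{1/m}\in A_1$ when $p_i=1$ --- which is precisely why Theorems~1.1--1.4 of this paper impose the \emph{extra} hypothesis $w_i\in A_\infty$. Even granting the decomposition, your exceptional set is a union of cubes whose $w_i$-measures are controlled, whereas the level set must be measured in $\nu_{\vec w}=\prod_i w_i^{p/p_i}$, and you offer no mechanism for converting one into the other. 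Second, invoking Theorem~A ``at a slightly shifted exponent tuple'' does not produce an estimate in the right space: the target weight $\nu_{\vec w}$ itself depends on $\vec P$, so Theorem~A for exponents $\tilde p_i>1$ bounds $T$ into $L^{\tilde p}\bigl(\prod_i w_i^{\tilde p/\tilde p_i}\bigr)$ rather than into anything involving $\nu_{\vec w}$, and membership of $\vec w$ in the shifted class $A_{\vec{\tilde P}}$ is not guaranteed. Finally, the ``calibration of thresholds'' you defer is not bookkeeping but the substance of the theorem; and since your argument already invokes the weak-type bound for $\mathcal M$ under $A_{\vec P}$ --- the deepest ingredient of \cite{lerner} --- the economical and correct route is to combine it with the sharp-function control of $T$ by $\mathcal M$, dispensing with the Calder\'on--Zygmund decomposition altogether.
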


Let $m\ge2$ and $0<\alpha<mn$. For given $\vec{f}=(f_1,\ldots,f_m)$, the $m$-linear fractional integral operator is defined by
\begin{equation}
I_{\alpha}(\vec{f})(x)=I_{\alpha}(f_1,\ldots,f_m)(x)=
\int_{(\mathbb R^n)^m}\frac{f_1(y_1)\cdots f_m(y_m)}{|(x-y_1,\ldots,x-y_m)|^{mn-\alpha}}\,dy_1\cdots dy_m.
\end{equation}
For the boundedness properties of multilinear fractional integrals on various function spaces, we refer the reader to \cite{grafakos1,Iida1,Iida2,Iida3,kenig,pradolini,tang}. In 2009, Moen \cite{moen} considered the weighted norm inequalities for multilinear fractional integral operators and constructed the class of multiple $A_{\vec{P},q}$ weights (see also \cite{chen}).

\newtheorem*{thmc}{Theorem C}
\begin{thmc}[\cite{chen,moen}]
Let $m\ge2$, $0<\alpha<mn$ and $I_\alpha$ be an $m$-linear fractional integral operator. If $p_1,\ldots,p_m\in(1,\infty)$, $1/p=\sum_{k=1}^m 1/{p_k}$ and $1/q=1/p-\alpha/n$, and $\vec{w}=(w_1,\ldots,w_m)$ satisfy the $A_{\vec{P},q}$ condition, then there exists a constant $C>0$ independent of $\vec{f}=(f_1,\ldots,f_m)$ such that
\begin{equation*}
\big\|I_\alpha(\vec{f})\big\|_{L^q((\nu_{\vec{w}})^q)}\le C\prod_{i=1}^m\big\|f_i\big\|_{L^{p_i}(w^{p_i}_i)},
\end{equation*}
where $\nu_{\vec{w}}=\prod_{i=1}^m w_i$.
\end{thmc}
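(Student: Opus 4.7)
The plan is to reduce the weighted $L^{q}$ bound for $I_\alpha$ to the corresponding bound for the multilinear fractional maximal operator
\[
\mathcal{M}_\alpha(\vec f)(x)=\sup_{Q\ni x}|Q|^{\alpha/n}\prod_{i=1}^{m}\frac{1}{|Q|}\int_{Q}|f_i(y_i)|\,dy_i,
\]
via a Fefferman--Stein sharp-function comparison, and then to verify the maximal inequality directly from the definition of the $A_{\vec P,q}$ class.

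I would first establish the Coifman-type pointwise estimate
\[
M^{\sharp}_{\delta}\bigl(I_\alpha(\vec f)\bigr)(x)\le C\,\mathcal{M}_\alpha(\vec f)(x),\qquad 0<\delta<1/m,
\]
where $M^{\sharp}_{\delta}g=(M^{\sharp}(|g|^{\delta}))^{1/\delta}$. Fix $x$ and a cube $Q\ni x$, set $Q^{*}=8\sqrt{n}\,Q$, and split each $f_i=f_i\chi_{Q^{*}}+f_i\chi_{(Q^{*})^{c}}$; expanding the $m$-linear product yields $2^{m}$ terms. The fully local term is handled by Kolmogorov's inequality together with the endpoint weak-type bound $L^{1}\times\cdots\times L^{1}\to L^{n/(mn-\alpha),\infty}$ for $I_\alpha$, while each mixed term is estimated by the size estimate $|x-y_1|+\cdots+|x-y_m|\sim\ell(Q)$ for the fractional kernel: this produces an extra factor of $|Q|^{\alpha/n}$ and reduces the remaining factors to products of averages dominated by $\mathcal{M}_\alpha(\vec f)(x)$. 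Since any weight in $A_{\vec P,q}$ satisfies $\nu_{\vec w}^{\,q}\in A_\infty$, the Fefferman--Stein inequality (applied after a standard truncation/approximation to ensure a~priori finiteness) then gives
\[
\bigl\|I_\alpha(\vec f)\bigr\|_{L^{q}(\nu_{\vec w}^{q})}\le C\bigl\|\mathcal{M}_\alpha(\vec f)\bigr\|_{L^{q}(\nu_{\vec w}^{q})}.
\]

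It remains to bound $\mathcal{M}_\alpha$. For each cube $Q$ H\"older's inequality yields
\[
\frac{1}{|Q|}\int_{Q}|f_i|\le\left(\frac{1}{|Q|}\int_{Q}|f_i|^{p_i}w_i^{p_i}\right)^{1/p_i}\left(\frac{1}{|Q|}\int_{Q}w_i^{-p'_i}\right)^{1/p'_i},
\]
and multiplying over $i$, pulling in $|Q|^{\alpha/n}$, and invoking the $A_{\vec P,q}$ condition to absorb the weight averages against $\bigl(|Q|^{-1}\int_{Q}\nu_{\vec w}^{\,q}\bigr)^{1/q}$ reduces the problem to a weighted dyadic Carleson embedding, which a sparse or principal-cube decomposition handles by standard means.

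The main obstacle is the sharp-function step: the splitting of the $m$-linear integral must simultaneously track the $\alpha/n$-degree of homogeneity of $I_\alpha$ and rely on the correct weak endpoint bound, both of which are slightly more delicate than in the Calder\'on--Zygmund case treated by Lerner et al.\ (Theorem~A). Once the pointwise inequality is in place, the remaining $A_\infty$ extrapolation and Carleson embedding steps are routine.
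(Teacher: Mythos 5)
Theorem C is not proved in this paper at all: it is imported verbatim from the cited references \cite{chen,moen}, so there is no in-paper argument to compare yours against. Your outline is, in substance, the proof given in those references: the pointwise sharp-function estimate $M^{\sharp}_{\delta}(I_\alpha\vec f)\le C\,\mathcal M_\alpha(\vec f)$ (local part by Kolmogorov plus the Kenig--Stein endpoint bound into $L^{n/(mn-\alpha),\infty}$, nonlocal parts by the size estimate), followed by the Fefferman--Stein inequality for $\nu_{\vec w}^{\,q}\in A_\infty$ and the weighted bound for the multilinear fractional maximal operator. The only place your sketch is genuinely thin is the last step: the strong-type bound for $\mathcal M_\alpha$ under the multiple-weight condition is the main theorem of Moen's paper, not a routine Carleson embedding. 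After H\"older and the $A_{\vec P,q}$ condition you are left with a sum over principal cubes that does not converge from the definition alone; one needs the structural consequences of $\vec w\in A_{\vec P,q}$ recorded in Lemma 2.5 of this paper (namely $(\nu_{\vec w})^q\in A_{mq}$ and $w_i^{-p_i'}\in A_{mp_i'}$) together with a reverse H\"older ``openness'' argument to gain the summability. If you intend the proposal to be self-contained, that is the step to write out; otherwise your reduction is correct and consistent with how the result is actually established in \cite{chen,moen}.
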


\newtheorem*{thmd}{Theorem D}
\begin{thmd}[\cite{chen,moen}]
Let $m\ge2$, $0<\alpha<mn$ and $I_\alpha$ be an $m$-linear fractional integral operator. If $p_1,\ldots,p_m\in[1,\infty)$, $\min\{p_1,\ldots,p_m\}=1$, $1/p=\sum_{k=1}^m 1/{p_k}$ and $1/q=1/p-\alpha/n$, and $\vec{w}=(w_1,\ldots,w_m)$ satisfy the $A_{\vec{P},q}$ condition, then there exists a constant $C>0$ independent of $\vec{f}=(f_1,\ldots,f_m)$ such that
\begin{equation*}
\big\|I_\alpha(\vec{f})\big\|_{WL^q((\nu_{\vec{w}})^q)}\le C\prod_{i=1}^m\big\|f_i\big\|_{L^{p_i}(w^{p_i}_i)},
\end{equation*}
where $\nu_{\vec{w}}=\prod_{i=1}^m w_i$.
\end{thmd}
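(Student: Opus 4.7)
The plan is to adapt the Calder\'on--Zygmund decomposition strategy used by Lerner et al.\ in the proof of Theorem B to the fractional setting, invoking Theorem C as the strong-type ingredient. By homogeneity I may normalize $\prod_{i=1}^m\|f_i\|_{L^{p_i}(w_i^{p_i})}=1$ and reduce the task to showing $\nu_{\vec w}^q\{x:|I_\alpha(\vec f)(x)|>\lambda\}\le C\lambda^{-q}$ for every $\lambda>0$. Relabeling indices, assume $p_1=\cdots=p_\ell=1$ and $p_{\ell+1},\ldots,p_m>1$ for some $1\le\ell\le m$.

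First, for each $i\in\{1,\ldots,\ell\}$, perform a Calder\'on--Zygmund decomposition of $f_i$ with respect to the weight $w_i$ at a height proportional to a suitable power of $\lambda$ (chosen so that the fractional scaling $1/q=1/p-\alpha/n$ is respected). This produces pairwise disjoint dyadic cubes $\{Q_{i,j}\}_j$ and a splitting $f_i=g_i+b_i$ with $b_i=\sum_j b_{i,j}$, each $b_{i,j}$ supported on $Q_{i,j}$, of mean zero, with $L^1(w_i)$-norm controlled by the CZ average, while $g_i$ is essentially bounded by the same power of $\lambda$. For $i>\ell$ set $g_i=f_i$ and $b_i=0$. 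Let $\Omega=\bigcup_{i\le\ell}\bigcup_{j}(2\sqrt n)\,Q_{i,j}$; the standard CZ count on $\sum_j w_i(Q_{i,j})$ combined with the $A_{\vec P,q}$ condition gives $\nu_{\vec w}^q(\Omega)\lesssim\lambda^{-q}$, so it suffices to estimate the level set restricted to $\mathbb R^n\setminus\Omega$.

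Next, by multilinearity expand $I_\alpha(\vec f)=\sum_{\vec h}I_\alpha(h_1,\ldots,h_m)$, where $(h_1,\ldots,h_m)$ ranges over the $2^\ell$ choices with $h_i\in\{g_i,b_i\}$ for $i\le\ell$ and $h_i=g_i$ otherwise. For the ``all good'' piece $I_\alpha(g_1,\ldots,g_m)$, I would place each $g_i$ in $L^{\tilde p_i}(w_i^{\tilde p_i})$ for a $\tilde p_i>1$ slightly larger than $p_i$ (combining its $L^\infty$ bound with its $L^1(w_i)$-control), appeal to the openness of the $A_{\vec P,q}$ class to guarantee $\vec w\in A_{\vec{\tilde P},\tilde q}$ for the perturbed tuple, apply Theorem C, and pass to the weak-type estimate on this piece via Chebyshev. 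For each remaining term containing at least one bad factor, exploit the H\"older regularity of the kernel $|(x-y_1,\ldots,x-y_m)|^{-(mn-\alpha)}$ in each $y_i$-variable against the vanishing moment of $b_{i,j}$: this yields pointwise bounds in terms of $\|b_{i,j}\|_{L^1}$ with geometric decay off $Q_{i,j}$, and integration against $\nu_{\vec w}^q$ combined with the $A_{\vec P,q}$ condition converts the resulting weighted Carleson-type sums into the desired $\lambda^{-q}$ bound.

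The main obstacle will be the terms with several simultaneous bad factors: since the kernel of $I_\alpha$ does not factor across $y_1,\ldots,y_m$, the cancellation must be exploited one coordinate at a time while the remaining coordinates are treated by size, and one must make careful use of a reverse H\"older--type manipulation supplied by the $A_{\vec P,q}$ condition---passing from $\nu_{\vec w}^q$-averages over geometric dilates of the bad cubes back to products of $w_i$-averages at the sharp fractional exponent---to close the estimate with constants depending only on the normalized product $\prod_i\|f_i\|_{L^{p_i}(w_i^{p_i})}=1$.
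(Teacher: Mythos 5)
First, a point of comparison: the paper does not prove Theorem D at all. It is quoted as a known result from the references \cite{chen,moen} and used as a black box in the proof of Theorem 1.4, so there is no in-paper argument to measure your proposal against. Your outline --- Calder\'on--Zygmund decomposition of the $f_i$ with $p_i=1$, an exceptional set $\Omega$, the strong-type bound for the all-good term, and kernel regularity against the vanishing moments of the bad pieces --- is indeed the general route by which such endpoint results are established in the cited literature, so the strategy is sound in outline.

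That said, as written the sketch has genuine gaps at its two load-bearing points. The most serious is the all-good term. If you raise the exponents to $\tilde p_i>p_i$, the relation $1/\tilde q=1/\tilde p-\alpha/n$ forces $\tilde q\neq q$, and Theorem C at the level $(\vec{\tilde P},\tilde q)$ followed by Chebyshev controls $(\nu_{\vec w})^{\tilde q}\bigl(\{|I_\alpha(\vec g)|>\lambda\}\bigr)$, i.e.\ the level set measured against the \emph{different} weight $(\nu_{\vec w})^{\tilde q}$, not the weight $(\nu_{\vec w})^{q}$ appearing in the conclusion; you have not explained how to pass back. Moreover, the claimed membership $g_i\in L^{\tilde p_i}(w_i^{\tilde p_i})$ does not follow from combining $\|g_i\|_{L^\infty}$ with the $L^1(w_i)$ control, because $\int |g_i|^{\tilde p_i}w_i^{\tilde p_i}\,dx\le\|g_i\|_\infty^{\tilde p_i-1}\int|g_i|\,w_i^{\tilde p_i}\,dx$ involves $w_i^{\tilde p_i}$ rather than $w_i$ --- the weight changes with the exponent in the $A_{\vec P,q}$ scale, unlike in the classical $A_p$ setting. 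Secondary issues: a Calder\'on--Zygmund decomposition with respect to $w_i\,dx$ requires that measure to be doubling, which is not guaranteed for the individual $w_i$ by $\vec w\in A_{\vec P,q}$ alone (Lemma 2.5 only controls $w_i^{-p_i'}$ and $(\nu_{\vec w})^q$); and the bound $\nu_{\vec w}^q(\Omega)\lesssim\lambda^{-q}$ is not a ``standard CZ count,'' since estimating $\nu_{\vec w}^q(Q_{i,j}^*)$ via the $A_{\vec P,q}$ condition brings in averages of \emph{all} the weights $w_k$ over $Q_{i,j}$, so the exceptional-set estimate must use the normalizations of all $m$ functions simultaneously rather than the cubes of a single $f_i$. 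Each of these is fixable (and is fixed, with some labor, in \cite{chen,moen}), but none is automatic, and the first one in particular cannot be closed in the form you stated it.
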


On the other hand, the classical Morrey spaces $\mathcal L^{p,\lambda}$ were originally introduced by Morrey in \cite{morrey} to study the local behavior of solutions to second order elliptic partial differential equations. For the boundedness of the Hardy--Littlewood maximal operator, the fractional integral operator and the Calder\'on--Zygmund singular integral operator on these spaces, we refer the reader to \cite{adams,chiarenza,peetre}. For the properties and applications of classical Morrey spaces, one can see \cite{fan,fazio1,fazio2} and the references therein.

In 2009, Komori and Shirai \cite{komori} first defined the weighted Morrey spaces $L^{p,\kappa}(w)$ which could be
viewed as an extension of weighted Lebesgue spaces, and studied the boundedness of the above classical operators in Harmonic Analysis on these weighted spaces. Recently, in \cite{wang1,wang2,wang3,wang4,wang5,wang6,wang7,wang8}, we have established the continuity properties of some other operators and their commutators on the weighted Morrey spaces $L^{p,\kappa}(w)$.

The main purpose of this paper is to establish the boundedness properties of multilinear Calder\'on--Zygmund operators and multilinear fractional integrals on products of weighted Morrey spaces with multiple weights. We now formulate our main results as follows.

\newtheorem{theorem}{Theorem}[section]

\begin{theorem}
Let $m\ge2$ and $T$ be an $m$-linear Calder\'on--Zygmund operator. If $p_1,\ldots,p_m\in(1,\infty)$ and $p\in(0,\infty)$ with $1/p=\sum_{k=1}^m 1/{p_k}$, and $\vec{w}=(w_1,\ldots,w_m)\in A_{\vec{P}}$ with $w_1,\ldots,w_m\in A_\infty$, then for any $0<\kappa<1$, there exists a
constant $C>0$ independent of $\vec{f}=(f_1,\ldots,f_m)$ such that
\begin{equation*}
\big\|T(\vec{f})\big\|_{L^{p,\kappa}(\nu_{\vec{w}})}\le C\prod_{i=1}^m\big\|f_i\big\|_{L^{p_i,\kappa}(w_i)},
\end{equation*}
where $\nu_{\vec{w}}=\prod_{i=1}^m w_i^{p/{p_i}}$.
\end{theorem}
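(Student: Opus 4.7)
The plan is to fix an arbitrary ball $B=B(x_0,r)$ and control $\nu_{\vec w}(B)^{-\kappa/p}\|T(\vec f)\|_{L^p(\nu_{\vec w},B)}$ uniformly in $B$. Split each $f_i=f_i^{0}+f_i^{\infty}$ with $f_i^{0}=f_i\chi_{2B}$, so that by multilinearity $T(\vec f)=\sum_{\vec\alpha\in\{0,\infty\}^m}T(f_1^{\alpha_1},\ldots,f_m^{\alpha_m})$. The ``all--local'' piece $\vec\alpha=(0,\dots,0)$ is handled directly by Theorem~A: its $L^p(\nu_{\vec w})$-norm is bounded by $C\prod_{i=1}^m\|f_i^0\|_{L^{p_i}(w_i)}\le C\prod_{i=1}^mw_i(2B)^{\kappa/p_i}\|f_i\|_{L^{p_i,\kappa}(w_i)}$. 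For every other $\vec\alpha$ at least one argument sits in $(2B)^c$, so the kernel size condition combined with a dyadic decomposition of the outside region yields, for $x\in B$,
\[
|T(f_1^{\alpha_1},\ldots,f_m^{\alpha_m})(x)|\le C\sum_{j=1}^\infty\frac{1}{|2^{j+1}B|^m}\prod_{i=1}^m\int_{2^{j+1}B}|f_i(y_i)|\,dy_i.
\]

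I would then apply H\"older with exponents $(p_i,p_i')$ in each integral to insert the weight $w_i$,
\[
\int_{2^{j+1}B}|f_i|\le\|f_i\|_{L^{p_i,\kappa}(w_i)}\,w_i(2^{j+1}B)^{\kappa/p_i}\Bigl(\int_{2^{j+1}B}w_i^{1-p_i'}\Bigr)^{1/p_i'},
\]
and invoke the $A_{\vec P}$ hypothesis on $2^{j+1}B$, which (using $\sum_i1/p_i'+1/p=m$) converts $\prod_{i=1}^m(\int_{2^{j+1}B}w_i^{1-p_i'})^{1/p_i'}$ into $C|2^{j+1}B|^m/\nu_{\vec w}(2^{j+1}B)^{1/p}$. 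The factors $|2^{j+1}B|^m$ cancel, producing the pointwise estimate
\[
|T(\vec f^{\vec\alpha})(x)|\le C\prod_{i=1}^m\|f_i\|_{L^{p_i,\kappa}(w_i)}\sum_{j=1}^\infty\frac{\prod_{i=1}^mw_i(2^{j+1}B)^{\kappa/p_i}}{\nu_{\vec w}(2^{j+1}B)^{1/p}}.
\]

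The main obstacle is estimating this series, and here the additional hypothesis $w_i\in A_\infty$ is indispensable. I would first establish the reverse-H\"older-type comparison
\[
\prod_{i=1}^mw_i(E)^{p/p_i}\le C\,\nu_{\vec w}(E)\qquad\text{for every ball }E,
\]
by writing $\log\nu_{\vec w}=\sum_i(p/p_i)\log w_i$, applying Jensen's inequality to the exponential, and then using the standard characterization $|E|^{-1}\int_E w\le C\exp(|E|^{-1}\int_E\log w)$ of $A_\infty$ on each $w_i$ (together with $\sum_ip/p_i=1$). Raised to the power $\kappa/p$, this majorizes each summand by $C\,\nu_{\vec w}(2^{j+1}B)^{(\kappa-1)/p}$. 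Since $\vec w\in A_{\vec P}$ forces $\nu_{\vec w}\in A_{mp}\subset A_\infty$, the reverse-doubling bound $\nu_{\vec w}(B)/\nu_{\vec w}(2^{j+1}B)\le C\,2^{-(j+1)n\delta}$ with $\delta>0$, combined with $\kappa-1<0$, turns the series into a convergent geometric sum totalling $C\,\nu_{\vec w}(B)^{(\kappa-1)/p}$. Integration over $B$ against $\nu_{\vec w}$ supplies the missing factor $\nu_{\vec w}(B)^{1/p}$, yielding $\|T(\vec f^{\vec\alpha})\|_{L^p(\nu_{\vec w},B)}\le C\,\nu_{\vec w}(B)^{\kappa/p}\prod_i\|f_i\|_{L^{p_i,\kappa}(w_i)}$. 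The same reverse-H\"older inequality together with the doubling of $\nu_{\vec w}$ converts $\prod_iw_i(2B)^{\kappa/p_i}$ into $C\,\nu_{\vec w}(B)^{\kappa/p}$ for the local piece. Summing the $2^m$ contributions, dividing by $\nu_{\vec w}(B)^{\kappa/p}$, and taking the supremum over $B$ finishes the proof.
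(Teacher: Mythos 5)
Your proposal is correct and follows essentially the same route as the paper: the same $f_i^0+f_i^\infty$ decomposition, Theorem A for the local piece, the size-condition/dyadic-annuli pointwise bound for the remaining pieces, H\"older plus the $A_{\vec P}$ condition, the key comparison $\prod_i w_i(E)^{p/p_i}\le C\,\nu_{\vec w}(E)$ proved via Jensen and the reverse Jensen characterization of $A_\infty$ (this is exactly the paper's Lemma 3.1), and the $A_\infty$ decay estimate to sum the geometric series. No substantive differences.
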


\begin{theorem}
Let $m\ge2$ and $T$ be an $m$-linear Calder\'on--Zygmund operator. If $p_1,\ldots,p_m\in[1,\infty)$, $\min\{p_1,\ldots,p_m\}=1$ and $p\in(0,\infty)$ with $1/p=\sum_{k=1}^m 1/{p_k}$, and $\vec{w}=(w_1,\ldots,w_m)\in A_{\vec{P}}$ with $w_1,\ldots,w_m\in A_\infty$, then for any $0<\kappa<1$, there exists a
constant $C>0$ independent of $\vec{f}=(f_1,\ldots,f_m)$ such that
\begin{equation*}
\big\|T(\vec{f})\big\|_{WL^{p,\kappa}(\nu_{\vec{w}})}\le C\prod_{i=1}^m\big\|f_i\big\|_{L^{p_i,\kappa}(w_i)},
\end{equation*}
where $\nu_{\vec{w}}=\prod_{i=1}^m w_i^{p/{p_i}}$.
\end{theorem}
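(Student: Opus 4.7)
My plan is to adapt the classical Komori--Shirai strategy to the multilinear setting, using Theorem B for the ``local'' part of the decomposition and the kernel size condition for the ``tail'' parts (the latter actually producing a strong Morrey bound, which a fortiori dominates the weak one). Fix an arbitrary ball $B=B(x_0,r)\subset\mathbb R^n$; by the definition of the weak weighted Morrey norm, it suffices to show that for every $\lambda>0$,
\begin{equation*}
\lambda\cdot\nu_{\vec w}\bigl(\{x\in B:|T(\vec f)(x)|>\lambda\}\bigr)^{1/p}\le C\,\nu_{\vec w}(B)^{\kappa/p}\prod_{i=1}^m\|f_i\|_{L^{p_i,\kappa}(w_i)}.
\end{equation*}
Decompose each $f_i=f_i^0+f_i^\infty$ with $f_i^0=f_i\chi_{2B}$, expand $T(\vec f)$ by multilinearity into $2^m$ terms $T(f_1^{\alpha_1},\ldots,f_m^{\alpha_m})$ indexed by $\vec\alpha\in\{0,\infty\}^m$, and treat the local term ($\vec\alpha=\vec 0$) and the $2^m-1$ mixed terms separately.

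For the local term, Theorem B directly yields
\begin{equation*}
\lambda\cdot\nu_{\vec w}\bigl(\{|T(f_1^0,\ldots,f_m^0)|>\lambda/2^m\}\bigr)^{1/p}\le C\prod_{i=1}^m\|f_i\chi_{2B}\|_{L^{p_i}(w_i)}\le C\prod_{i=1}^m\|f_i\|_{L^{p_i,\kappa}(w_i)}\,w_i(2B)^{\kappa/p_i}.
\end{equation*}
The doubling property of each $w_i\in A_\infty$ gives $w_i(2B)\le C\,w_i(B)$, and the comparison $\prod_{i=1}^m w_i(B)^{p/p_i}\le C\,\nu_{\vec w}(B)$, a known consequence of $\vec w\in A_{\vec P}$ together with $w_i\in A_\infty$ for every $i$, yields $\prod_i w_i(2B)^{\kappa/p_i}\le C\,\nu_{\vec w}(B)^{\kappa/p}$, completing the local part.

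For each mixed term with $\vec\alpha\neq\vec 0$ and $x\in B$, I use only the kernel size estimate. Decomposing $(\mathbb R^n)^m$ into dyadic pieces according to $\max_j|x-y_j|\sim 2^k r$ ($k\ge 1$), and noting that $B(x,2^k r)\subset 2^{k+1}B$, one obtains the pointwise bound
\begin{equation*}
|T(f_1^{\alpha_1},\ldots,f_m^{\alpha_m})(x)|\le C\sum_{k=1}^\infty\frac{1}{|2^{k+1}B|^m}\prod_{i=1}^m\int_{2^{k+1}B}|f_i(y_i)|\,dy_i.
\end{equation*}
H\"older's inequality (or the $A_1$-type bound $\|w_i^{-1}\|_{L^\infty(Q)}$ when $p_i=1$), combined with the multilinear $A_{\vec P}$ condition applied on $2^{k+1}B$, gives
\begin{equation*}
\prod_{i=1}^m\int_{2^{k+1}B}|f_i|\le C\prod_{i=1}^m\|f_i\|_{L^{p_i,\kappa}(w_i)}\cdot\prod_{i=1}^m w_i(2^{k+1}B)^{\kappa/p_i}\cdot\frac{|2^{k+1}B|^m}{\nu_{\vec w}(2^{k+1}B)^{1/p}}.
\end{equation*}
Using the comparison $\prod_i w_i(Q)^{p/p_i}\le C\,\nu_{\vec w}(Q)$ on each $2^{k+1}B$ reduces the pointwise bound to $C\prod_i\|f_i\|_{L^{p_i,\kappa}(w_i)}\sum_{k\ge 1}\nu_{\vec w}(2^{k+1}B)^{(\kappa-1)/p}$. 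Since $\kappa<1$ and $\nu_{\vec w}$ is itself a Muckenhoupt weight (hence satisfies the $A_\infty$ reverse-doubling), the series sums geometrically to a multiple of $\nu_{\vec w}(B)^{(\kappa-1)/p}$, producing $|T(f_1^{\alpha_1},\ldots,f_m^{\alpha_m})(x)|\le C\prod_i\|f_i\|_{L^{p_i,\kappa}(w_i)}\,\nu_{\vec w}(B)^{(\kappa-1)/p}$ uniformly for $x\in B$. Integrating $|T(f_1^{\alpha_1},\ldots,f_m^{\alpha_m})|^p\,\nu_{\vec w}$ over $B$ then yields the desired strong Morrey estimate for each mixed term, which dominates the weak Morrey contribution.

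The chief technical obstacle is the mixed-term analysis: combining the dyadic-shell kernel estimate with the $A_{\vec P}$ condition on each dilated ball and with the comparison $\prod_i w_i(Q)^{p/p_i}\lesssim\nu_{\vec w}(Q)$, and then summing the resulting series via the reverse-doubling of $\nu_{\vec w}$. It is precisely here that the hypothesis $w_i\in A_\infty$ for each $i$ is essential, beyond the condition $\vec w\in A_{\vec P}$.
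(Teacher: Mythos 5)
Your argument is correct and follows essentially the same route as the paper: the same $f_i^0+f_i^\infty$ decomposition, Theorem B for the local term with the doubling and the reverse-Jensen comparison $\prod_i w_i(B)^{p/p_i}\le C\,\nu_{\vec w}(B)$ (the paper's Lemma 3.1), and the size-condition/H\"older/$A_{\vec P}$ chain plus $A_\infty$ reverse-doubling of $\nu_{\vec w}$ giving the uniform bound $|T(f_1^{\alpha_1},\ldots,f_m^{\alpha_m})(x)|\le C\prod_i\|f_i\|_{L^{p_i,\kappa}(w_i)}\,\nu_{\vec w}(B)^{(\kappa-1)/p}$ on $B$ for every mixed term. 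The only cosmetic difference is the last step, where you convert this $L^\infty(B)$ bound into the weak estimate via Chebyshev on the strong Morrey integral, while the paper argues that the level set is either empty or forces $\lambda\le C\,\nu_{\vec w}(B)^{(\kappa-1)/p}\prod_i\|f_i\|_{L^{p_i,\kappa}(w_i)}$; the two are equivalent.
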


\begin{theorem}
Let $m\ge2$, $0<\alpha<mn$ and $I_\alpha$ be an $m$-linear fractional integral operator. If $p_1,\ldots,p_m\in(1,\infty)$, $1/p=\sum_{k=1}^m 1/{p_k}$, $1/{q_k}=1/{p_k}-\alpha/{mn}$ and $1/q=\sum_{k=1}^m 1/{q_k}=1/p-\alpha/n$, and $\vec{w}=(w_1,\ldots,w_m)\in A_{\vec{P},q}$ with $w^{q_1}_1,\ldots,w^{q_m}_m\in A_\infty$, then for any $0<\kappa<p/q$, there exists a
constant $C>0$ independent of $\vec{f}=(f_1,\ldots,f_m)$ such that
\begin{equation*}
\big\|I_\alpha(\vec{f})\big\|_{L^{q,{\kappa q}/p}((\nu_{\vec{w}})^q)}\le C\prod_{i=1}^m
\big\|f_i\big\|_{L^{p_i,{\kappa p_iq}/{pq_i}}(w^{p_i}_i,w^{q_i}_i)},
\end{equation*}
where $\nu_{\vec{w}}=\prod_{i=1}^m w_i$.
\end{theorem}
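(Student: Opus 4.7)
The plan is to use the classical dyadic ball-decomposition technique adapted to the multilinear setting, with Theorem~C supplying the ``diagonal'' Lebesgue bound. Fix a ball $B=B(x_0,r_B)$; by the definition of the target Morrey norm it is enough to prove
\begin{equation*}
\int_B\bigl|I_\alpha(\vec f)(x)\bigr|^q(\nu_{\vec w})^q(x)\,dx\le C\,(\nu_{\vec w})^q(B)^{\kappa q/p}\prod_{i=1}^m\|f_i\|^q_{L^{p_i,\kappa p_iq/(pq_i)}(w_i^{p_i},w_i^{q_i})}.
\end{equation*}
I would decompose each $f_i=f_i^0+f_i^\infty$ with $f_i^0:=f_i\chi_{2B}$, and expand $I_\alpha(\vec f)$ by multilinearity into $2^m$ pieces $I_\alpha(f_1^{a_1},\ldots,f_m^{a_m})$, $\vec a\in\{0,\infty\}^m$, treating the one ``local'' piece ($\vec a=\vec 0$) and the $2^m-1$ ``global'' pieces separately.

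For the local piece, I would apply Theorem~C on $\mathbb R^n$ to get $\|I_\alpha(\vec f^0)\|_{L^q((\nu_{\vec w})^q)}\le C\prod_i\|f_i\chi_{2B}\|_{L^{p_i}(w_i^{p_i})}$. The two-weight Morrey-norm definition gives $\|f_i\chi_{2B}\|_{L^{p_i}(w_i^{p_i})}\le\|f_i\|\cdot w_i^{q_i}(2B)^{\kappa q/(pq_i)}$, while doubling of $w_i^{q_i}\in A_\infty$ replaces $2B$ by $B$. Raising to the $q$-th power and multiplying over $i$, the total weight exponent is $\sum_i\kappa q^2/(pq_i)=\kappa q/p$ (since $\sum 1/q_i=1/q$); one then reconciles $\prod_iw_i^{q_i}(B)^{\kappa q^2/(pq_i)}$ with $(\nu_{\vec w})^q(B)^{\kappa q/p}$ using a H\"older comparison together with the $A_{\vec P,q}$ relation between $(\nu_{\vec w})^q$ and the single-weight measures.

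For each global piece I would bound the kernel by its size estimate and decompose $(2B)^c$ dyadically as $\bigcup_{k\ge 1}(2^{k+1}B\setminus 2^kB)$. For $x\in B$ and $y_i\in 2^{k_i+1}B\setminus 2^{k_i}B$ the denominator is comparable to $2^{\max_i k_i}r_B$, so
\begin{equation*}
|I_\alpha(f_1^{a_1},\ldots,f_m^{a_m})(x)|\le C\int_{F_1\times\cdots\times F_m}\frac{|f_1(y_1)|\cdots|f_m(y_m)|}{(|x-y_1|+\cdots+|x-y_m|)^{mn-\alpha}}\,d\vec y,
\end{equation*}
where $F_i=2B$ or $(2B)^c$ according to $a_i$. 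After applying H\"older on each shell with exponents $(p_i,p_i')$, the factors $\bigl(\int w_i^{-p_i'}\bigr)^{1/p_i'}$ are controlled via the $A_{\vec P,q}$ condition
\begin{equation*}
\Bigl(\frac{1}{|Q|}\!\int_Q(\nu_{\vec w})^q\Bigr)^{1/q}\prod_{i=1}^m\Bigl(\frac{1}{|Q|}\!\int_Qw_i^{-p_i'}\Bigr)^{1/p_i'}\le[\vec w]_{A_{\vec P,q}},
\end{equation*}
while $\bigl(\int|f_i|^{p_i}w_i^{p_i}\bigr)^{1/p_i}$ is controlled by the Morrey norm of $f_i$ on the enlarged ball $2^{k_i+1}B$.

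Summing over the multi-index $\vec k$ reduces the estimate to a product of geometric series $\sum_{k\ge1}2^{-k\delta_i}$ in the infinity-coordinates; the condition $\kappa<p/q$ is precisely what makes each $\delta_i>0$ once the polynomial growth of $w_i^{q_i}(2^{k_i+1}B)^{\kappa q^2/(pq_i)}$ (from $A_\infty$-doubling of $w_i^{q_i}$) has been absorbed against the kernel decay $(2^{k_i}r_B)^{\alpha-mn}$. The hardest step I anticipate is the global estimate: the combinatorial bookkeeping for the $2^m-1$ pieces, ordering H\"older's inequality so that the $A_{\vec P,q}$ condition applies uniformly on each dyadic shell, and verifying that the exponents $\delta_i$ remain strictly positive under the precise hypothesis $\kappa<p/q$ even after the $w_i^{q_i}$-doubling growth is accounted for.
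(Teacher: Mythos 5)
Your overall skeleton (split $f_i=f_i^0+f_i^\infty$ with $f_i^0=f_i\chi_{2B}$, treat the local piece by Theorem C, and treat the global pieces by the size estimate plus H\"older and the $A_{\vec P,q}$ condition on dyadic annuli) coincides with the paper's, but two of your key steps do not work as stated. The first gap is the ``reconciliation'' of $\prod_i w_i^{q_i}(B)^{\kappa q^2/(pq_i)}$ with $(\nu_{\vec w})^q(B)^{\kappa q/p}$, which amounts to the inequality $\prod_{i=1}^m\bigl(\int_B w_i^{q_i}(x)\,dx\bigr)^{q/q_i}\le C\int_B\nu_{\vec w}(x)^q\,dx$. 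This cannot follow from ``a H\"older comparison together with the $A_{\vec P,q}$ relation'': H\"older with exponents $q_i/q$ (note $\sum_i q/q_i=1$) gives exactly the reverse inequality $\int_B\nu_{\vec w}^q\le\prod_i\bigl(\int_Bw_i^{q_i}\bigr)^{q/q_i}$, and combining the $A_{\vec P,q}$ condition with the H\"older bound $1\le\bigl(\frac{1}{|B|}\int_Bw_i^{q_i}\bigr)^{a/q_i}\bigl(\frac{1}{|B|}\int_Bw_i^{-p_i'}\bigr)^{a/p_i'}$ again only produces the wrong direction. What you need is a converse-H\"older statement that is false for general weights; it is precisely here that the extra hypothesis $w_1^{q_1},\ldots,w_m^{q_m}\in A_\infty$ must be used, through the reverse Jensen inequality $\int_Bw\le C|B|\exp\bigl(\frac{1}{|B|}\int_B\log w\bigr)$ (this is the paper's Lemma 4.1). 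Without that lemma neither your local estimate nor your annulus estimates close, since the same conversion of $\prod_iw_i^{q_i}(2^{j+1}B)^{\kappa q/(pq_i)}$ into $\nu_{\vec w}^q(2^{j+1}B)^{\kappa/p}$ is needed on every annulus.

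The second gap is the convergence mechanism for the global sum. Doubling of $w_i^{q_i}$ gives only $w_i^{q_i}(2^kB)\le C^kw_i^{q_i}(B)$ with an uncontrolled constant $C$, so this growth cannot simply be ``absorbed against the kernel decay $(2^kr_B)^{\alpha-mn}$''; and in fact, once you apply H\"older and the $A_{\vec P,q}$ condition on a shell, the kernel decay is exactly consumed in cancelling the powers of $|2^{j+1}B|$ (because $1/q=1/p-\alpha/n$), leaving the factor $\nu_{\vec w}^q(2^{j+1}B)^{\kappa/p-1/q}$ with no leftover decay. The summability comes instead from the $A_\infty$ smallness estimate for $(\nu_{\vec w})^q\in A_{mq}$ (Lemma 2.3), namely $\nu_{\vec w}^q(B)/\nu_{\vec w}^q(2^{j+1}B)\le C\bigl(|B|/|2^{j+1}B|\bigr)^{\delta'}$, which turns the series into $\sum_j2^{-jn\delta'(1/q-\kappa/p)}$; this is exactly where $0<\kappa<p/q$ enters, not through per-coordinate exponents $\delta_i$. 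Finally, using independent shell indices $k_1,\ldots,k_m$ conflicts with the fact that $A_{\vec P,q}$ couples all $m$ weights on one and the same ball; the paper avoids this by decomposing $(\mathbb R^n)^m\setminus(2B)^m=\bigcup_j\bigl((2^{j+1}B)^m\setminus(2^jB)^m\bigr)$ so that all coordinates sit at a single scale, and if you keep separate indices you must first enlarge every shell to the largest one before invoking $A_{\vec P,q}$.
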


\begin{theorem}
Let $m\ge2$, $0<\alpha<mn$ and $I_\alpha$ be an $m$-linear fractional integral operator. If $p_1,\ldots,p_m\in[1,\infty)$, $\min\{p_1,\ldots,p_m\}=1$, $1/p=\sum_{k=1}^m 1/{p_k}$, $1/{q_k}=1/{p_k}-\alpha/{mn}$ and $1/q=\sum_{k=1}^m 1/{q_k}=1/p-\alpha/n$, and $\vec{w}=(w_1,\ldots,w_m)\in A_{\vec{P},q}$ with $w^{q_1}_1,\ldots,w^{q_m}_m\in A_\infty$, then for any $0<\kappa<p/q$, there exists a constant $C>0$ independent of $\vec{f}=(f_1,\ldots,f_m)$ such that
\begin{equation*}
\big\|I_\alpha(\vec{f})\big\|_{WL^{q,{\kappa q}/p}((\nu_{\vec{w}})^q)}\le C\prod_{i=1}^m
\big\|f_i\big\|_{L^{p_i,{\kappa p_iq}/{pq_i}}(w^{p_i}_i,w^{q_i}_i)},
\end{equation*}
where $\nu_{\vec{w}}=\prod_{i=1}^m w_i$.
\end{theorem}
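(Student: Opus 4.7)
The plan is to follow the standard local/global decomposition for Morrey-type estimates, the only essential change from the proof of Theorem 3 being that the local piece is now controlled by the weak-type endpoint estimate of Theorem D rather than by Theorem C. Fix a ball $B=B(x_0,r_B)$ and decompose each input as $f_i=f_i^0+f_i^\infty$, where $f_i^0=f_i\chi_{2B}$. By multilinearity, $I_\alpha(\vec f)$ splits into $2^m$ terms of the form $I_\alpha(f_1^{\alpha_1},\ldots,f_m^{\alpha_m})$ with $\alpha_i\in\{0,\infty\}$; since the distribution function of a sum is bounded (up to a constant which is absorbable even in the quasi-Banach range $q<1$) by the sum of the individual distribution functions, it suffices to prove the weak Morrey bound for each of these $2^m$ pieces separately.

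For the ``all local'' piece $\vec\alpha=(0,\ldots,0)$, Theorem D yields
\[
\lambda\cdot(\nu_{\vec w})^q\bigl(\{x\in B:|I_\alpha(f_1^0,\ldots,f_m^0)(x)|>\lambda\}\bigr)^{1/q}\lesssim\prod_{i=1}^m\|f_i^0\|_{L^{p_i}(w_i^{p_i})}.
\]
Unwinding each $\|f_i^0\|_{L^{p_i}(w_i^{p_i})}$ from the definition of the two-weight Morrey norm pulls out a factor $w_i^{q_i}(2B)^{\kappa q/(pq_i)}$. Combining these factors via the identity $\sum_{i=1}^m q/q_i=1$, the multilinear H\"older inequality for $A_{\vec P,q}$ weights, and the doubling of the $A_\infty$ weights $w_i^{q_i}$, one arrives at
\[
\prod_{i=1}^m w_i^{q_i}(2B)^{\kappa q/(pq_i)}\lesssim(\nu_{\vec w})^q(B)^{\kappa/p},
\]
which is exactly the factor that appears on the right of the desired weak Morrey inequality.

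For each $\vec\alpha\ne(0,\ldots,0)$, I abandon the weak inequality and work pointwise. Starting from the kernel size bound
\[
|I_\alpha(f_1^{\alpha_1},\ldots,f_m^{\alpha_m})(x)|\lesssim\int\cdots\int\frac{\prod_i|f_i^{\alpha_i}(y_i)|}{(|x-y_1|+\cdots+|x-y_m|)^{mn-\alpha}}\,dy_1\cdots dy_m,
\]
decomposing each coordinate with $\alpha_i=\infty$ dyadically into annular shells $2^{j+1}B\setminus 2^jB$ ($j\ge1$), and applying H\"older's inequality on every shell, a routine (but lengthy) calculation produces the pointwise bound
\[
|I_\alpha(f_1^{\alpha_1},\ldots,f_m^{\alpha_m})(x)|\lesssim\prod_{i=1}^m\|f_i\|_{L^{p_i,\kappa p_iq/(pq_i)}(w_i^{p_i},w_i^{q_i})}\cdot(\nu_{\vec w})^q(B)^{\kappa/p-1/q}\qquad(x\in B).
\]
Multiplying by the trivial factor $(\nu_{\vec w})^q(B)^{1/q}$ converts this $L^\infty(B)$ bound into the weak Morrey estimate for this piece (so the weak output norm for the ``global'' terms is in fact controlled by an $L^\infty$ bound), and summing the $2^m$ contributions finishes the argument. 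The main obstacle I anticipate is the convergence of the geometric series over the dyadic index $j$ arising for each non-trivial $\vec\alpha$: the exponents generated by the annular decomposition, by the $A_{\vec P,q}$ multilinear H\"older inequality, and by the reverse-H\"older growth of the $A_\infty$ weights $w_i^{q_i}$ must together yield a strictly negative power of $2^j$, and this is precisely what the hypothesis $\kappa<p/q$ is engineered to guarantee.
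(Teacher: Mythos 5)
Your proposal is correct and follows essentially the same route as the paper: the same $f_i=f_i^0+f_i^\infty$ decomposition, Theorem D for the all-local piece combined with Lemma 4.1 and doubling, a pointwise $L^\infty(B)$ bound of size $(\nu_{\vec w})^q(B)^{\kappa/p-1/q}$ for every term containing an $f_i^\infty$ (converted to the weak estimate by the trivial factor $(\nu_{\vec w})^q(B)^{1/q}$), and convergence of the dyadic series from Lemma 2.3 together with $\kappa<p/q$. The only detail worth making explicit when you write it up is that for the indices with $p_i=1$ the H\"older step on each shell must be replaced by the $L^1$--$L^\infty$ pairing matched with the $\bigl(\operatorname{ess\,inf}_{2^{j+1}B}w_i\bigr)^{-1}$ interpretation of the $A_{\vec P,q}$ condition, exactly as the paper does.
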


\section{Notations and definitions}

The classical $A_p$ weight theory was first introduced by Muckenhoupt in the study of weighted $L^p$ boundedness of Hardy--Littlewood maximal functions in \cite{muckenhoupt1}. A weight $w$ is a nonnegative, locally integrable function on $\mathbb R^n$, $B=B(x_0,r_B)$ denotes the ball with the center $x_0$ and radius $r_B$. For $1<p<\infty$, a weight function $w$ is said to belong to $A_p$, if there is a constant $C>0$ such that for every ball $B\subseteq \mathbb R^n$,
\begin{equation}
\left(\frac1{|B|}\int_B w(x)\,dx\right)\left(\frac1{|B|}\int_B w(x)^{-1/{(p-1)}}\,dx\right)^{p-1}\le C,
\end{equation}
where $|B|$ denotes the Lebesgue measure of $B$. For the case $p=1$, $w\in A_1$, if there is a constant $C>0$ such that for every ball $B\subseteq \mathbb R^n$,
\begin{equation}
\frac1{|B|}\int_B w(x)\,dx\le C\cdot\underset{x\in B}{\mbox{ess\,inf}}\;w(x).
\end{equation}
A weight function $w\in A_\infty$ if it satisfies the $A_p$ condition for some $1<p<\infty$.
We also need another weight class $A_{p,q}$ introduced by Muckenhoupt and Wheeden in \cite{muckenhoupt2}. A weight function $w$ belongs to $A_{p,q}$ for $1<p<q<\infty$ if there is a constant $C>0$ such that for every ball $B\subseteq \mathbb R^n$,
\begin{equation}
\left(\frac{1}{|B|}\int_B w(x)^q\,dx\right)^{1/q}\left(\frac{1}{|B|}\int_B w(x)^{-p'}\,dx\right)^{1/{p'}}\le C.
\end{equation}
When $p=1$, $w$ is in the class $A_{1,q}$ with $1<q<\infty$ if there is a constant $C>0$ such that for every ball $B\subseteq \mathbb R^n$,
\begin{equation}
\left(\frac{1}{|B|}\int_B w(x)^q\,dx\right)^{1/q}\bigg(\underset{x\in B}{\mbox{ess\,sup}}\,\frac{1}{w(x)}\bigg)\le C.
\end{equation}

Now let us recall the definitions of multiple weights. For $m$ exponents $p_1,\ldots,p_m$, we will write $\vec{P}$ for the vector $\vec{P}=(p_1,\ldots,p_m)$. Let $p_1,\ldots,p_m\in[1,\infty)$ and $p\in(0,\infty)$ with $1/p=\sum_{k=1}^m 1/{p_k}$. Given $\vec{w}=(w_1,\ldots,w_m)$, set $\nu_{\vec{w}}=\prod_{i=1}^m w_i^{p/{p_i}}$. We say that $\vec{w}$ satisfies the $A_{\vec{P}}$ condition if it satisfies
\begin{equation}
\sup_B\left(\frac{1}{|B|}\int_B \nu_{\vec{w}}(x)\,dx\right)^{1/p}\prod_{i=1}^m\left(\frac{1}{|B|}\int_B w_i(x)^{1-p'_i}\,dx\right)^{1/{p'_i}}<\infty.
\end{equation}
When $p_i=1$, $\big(\frac{1}{|B|}\int_B w_i(x)^{1-p'_i}\,dx\big)^{1/{p'_i}}$ is understood as $\big(\inf_{x\in B}w_i(x)\big)^{-1}$.

Let $p_1,\ldots,p_m\in[1,\infty)$, $1/p=\sum_{k=1}^m 1/{p_k}$ and $q>0$. Given $\vec{w}=(w_1,\ldots,w_m)$, set $\nu_{\vec{w}}=\prod_{i=1}^m w_i$. We say that $\vec{w}$ satisfies the $A_{\vec{P},q}$ condition if it satisfies
\begin{equation}
\sup_B\left(\frac{1}{|B|}\int_B \nu_{\vec{w}}(x)^q\,dx\right)^{1/q}\prod_{i=1}^m\left(\frac{1}{|B|}\int_B w_i(x)^{-p'_i}\,dx\right)^{1/{p'_i}}<\infty.
\end{equation}
When $p_i=1$, $\big(\frac{1}{|B|}\int_B w_i(x)^{-p'_i}\,dx\big)^{1/{p'_i}}$ is understood as $\big(\inf_{x\in B}w_i(x)\big)^{-1}$.

Given a ball $B$ and $\lambda>0$, $\lambda B$ denotes the ball with the same center as $B$ whose radius is $\lambda$ times that of $B$. For a given weight function $w$ and a measurable set $E$, we also denote the Lebesgue measure of $E$ by $|E|$ and the weighted measure of $E$ by $w(E)$, where $w(E)=\int_E w(x)\,dx$.

\newtheorem{lemma}[theorem]{Lemma}

\begin{lemma}[\cite{garcia}]
Let $w\in A_p$ with $1\le p<\infty$. Then, for any ball $B$, there exists an absolute constant $C>0$ such that
\begin{equation*}
w(2B)\le C\,w(B).
\end{equation*}
\end{lemma}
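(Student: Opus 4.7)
The plan is to deduce the doubling property directly from the defining inequalities (2.1)--(2.2) of the $A_p$ class, treating the cases $1<p<\infty$ and $p=1$ separately. The underlying idea is classical: the $A_p$ inequality applied on the enlarged ball $2B$ controls $w(2B)$ from above in terms of the reverse integral $\int_{2B}w^{-1/(p-1)}$, and a complementary Hölder-type lower bound controls $w(B)$ from below by the same quantity, so that the reverse integrals cancel when the two bounds are combined.

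For $1<p<\infty$, I would first apply Hölder's inequality with exponents $p$ and $p'$ to the trivial identity $|E|=\int_E w(x)^{1/p}w(x)^{-1/p}\,dx$ for an arbitrary measurable set $E$, obtaining
\[
|E|^{p}\le w(E)\Big(\int_E w(x)^{-1/(p-1)}\,dx\Big)^{p-1}.
\]
Specializing to $E=B$ and enlarging the reverse integral from $B$ to $2B$ (permissible since the integrand is nonnegative) produces the lower bound
\[
w(B)\ge|B|^{p}\Big(\int_{2B}w(x)^{-1/(p-1)}\,dx\Big)^{-(p-1)}.
\]
On the other hand, rewriting (2.1) for the ball $2B$ yields the matching upper bound
\[
w(2B)\le C|2B|^{p}\Big(\int_{2B}w(x)^{-1/(p-1)}\,dx\Big)^{-(p-1)}.
\]
Dividing these two estimates cancels the reverse integrals and leaves $w(2B)\le C\cdot 2^{np}\,w(B)$, which is the desired doubling property with constant depending only on $n$, $p$, and the $A_p$ constant of $w$.

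For $p=1$, I would use (2.2) on the ball $2B$ together with the two elementary monotonicity facts $\mathrm{ess\,inf}_{2B}w\le\mathrm{ess\,inf}_B w$ (from $B\subset 2B$) and $\mathrm{ess\,inf}_B w\le|B|^{-1}w(B)$ (the average dominates the essential infimum), chaining them into $w(2B)\le C|2B|\,\mathrm{ess\,inf}_{2B}w\le C\cdot 2^n\,w(B)$.

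No substantive obstacle is expected: the lemma is a textbook consequence of the $A_p$ definition. The only care needed is to keep the direction of the essential-infimum inequalities correct in the case $p=1$, and to absorb the dependence on $n$, $p$, and the $A_p$ constant of $w$ into the single constant $C$ allowed by the statement.
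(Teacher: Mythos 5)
Your argument is correct and complete: the H\"older step $|B|^p\le w(B)\bigl(\int_{2B}w^{-1/(p-1)}\bigr)^{p-1}$ combined with the $A_p$ inequality on $2B$ cancels the reverse integrals exactly as claimed, and the $p=1$ case via the essential infimum is handled with the inequalities in the right direction. The paper gives no proof of this lemma (it is quoted from Garcia-Cuerva and Rubio de Francia), and what you wrote is precisely the standard textbook argument found there, yielding the doubling constant $C\,2^{np}$ depending only on $n$, $p$, and the $A_p$ constant of $w$.
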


\begin{lemma}[\cite{duoand}]
Let $w\in A_\infty$. Then for all balls $B\subseteq \mathbb R^n$, the following reverse Jensen inequality holds.
\begin{equation*}
\int_B w(x)\,dx\le C|B|\cdot\exp\left(\frac{1}{|B|}\int_B\log w(x)\,dx\right).
\end{equation*}
\end{lemma}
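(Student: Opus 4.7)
The plan is to derive the reverse Jensen inequality directly from an $A_p$ condition, exploiting the fact that $w\in A_\infty$ means $w\in A_p$ for some finite $p\ge 1$. I would first choose such a $p$ and then treat the cases $p>1$ and $p=1$ separately, with the former being the substantive one.

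For $p>1$, the starting point is the $A_p$ bound
\begin{equation*}
\left(\frac{1}{|B|}\int_B w(x)\,dx\right)\left(\frac{1}{|B|}\int_B w(x)^{-1/(p-1)}\,dx\right)^{p-1}\le K.
\end{equation*}
The key observation is that the second factor can be bounded from below by $\exp\!\left(-\frac{1}{|B|}\int_B \log w\,dx\right)$ via the ordinary Jensen inequality. More precisely, applying Jensen to the concave function $\log$ with respect to the probability measure $dx/|B|$ on $B$ and the integrand $w^{-1/(p-1)}$ gives
\begin{equation*}
-\frac{1}{p-1}\cdot\frac{1}{|B|}\int_B \log w(x)\,dx\le \log\!\left(\frac{1}{|B|}\int_B w(x)^{-1/(p-1)}\,dx\right).
\end{equation*}
Exponentiating and raising to the $(p-1)$-th power yields the desired lower bound. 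Substituting it back into the $A_p$ inequality collapses the middle factor and produces
\begin{equation*}
\left(\frac{1}{|B|}\int_B w(x)\,dx\right)\exp\!\left(-\frac{1}{|B|}\int_B \log w(x)\,dx\right)\le K,
\end{equation*}
which is exactly the stated conclusion with $C=K$.

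The case $p=1$ is in fact stronger and requires no Jensen manipulation: the $A_1$ condition gives $(1/|B|)\int_B w\,dx \le C\cdot\mathrm{ess\,inf}_{x\in B} w(x)$, and the pointwise bound $\log w(x)\ge \log\mathrm{ess\,inf}_{y\in B} w(y)$ a.e.\ on $B$ gives $\mathrm{ess\,inf}_{x\in B} w(x)\le \exp\!\left((1/|B|)\int_B \log w\,dx\right)$, which chains with the $A_1$ estimate to yield the claim. No real obstacle presents itself; once the $A_p$ condition is written out, it already has the structure needed, and the one-line Jensen application is the entire content of the argument.
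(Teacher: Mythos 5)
Your proof is correct, and it is the standard argument for this implication (the paper itself gives no proof, citing the result to Duoandikoetxea's book, where the $A_p\Rightarrow$ reverse Jensen direction is proved in exactly this way). The one-line application of Jensen's inequality to $\log$ with integrand $w^{-1/(p-1)}$ is indeed the whole content, and your $p=1$ case, while not strictly needed under the paper's definition of $A_\infty$ (which requires $w\in A_p$ for some $p>1$), is also handled correctly.
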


\begin{lemma}[\cite{garcia}]
Let $w\in A_\infty$. Then for all balls $B$ and all measurable subsets $E$ of $B$, there exists $\delta>0$ such that
\begin{equation*}
\frac{w(E)}{w(B)}\le C\left(\frac{|E|}{|B|}\right)^\delta.
\end{equation*}
\end{lemma}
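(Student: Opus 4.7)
The plan is to derive the reverse doubling estimate from the reverse H\"older inequality for $A_\infty$ weights, which I will take as the main tool. Recall that $w\in A_\infty$ is equivalent (up to constants) to the existence of exponents $s>1$ and $C_1>0$ such that, for every ball $B\subseteq\mathbb R^n$,
\begin{equation*}
\left(\frac{1}{|B|}\int_B w(x)^s\,dx\right)^{1/s}\le C_1\cdot\frac{1}{|B|}\int_B w(x)\,dx.
\end{equation*}
This characterization is classical and can itself be extracted from the reverse Jensen inequality stated in the previous Lemma combined with a Calder\'on--Zygmund stopping-time decomposition; I would simply invoke it rather than reprove it.

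Given the reverse H\"older inequality, the proof is a one-line application of ordinary H\"older's inequality. For a measurable subset $E\subseteq B$, write $\chi_E$ for its characteristic function and apply H\"older with exponents $s$ and $s'=s/(s-1)$:
\begin{equation*}
w(E)=\int_B w(x)\chi_E(x)\,dx\le \left(\int_B w(x)^s\,dx\right)^{1/s}|E|^{1/{s'}}.
\end{equation*}
Using the reverse H\"older inequality to control the first factor, one gets
\begin{equation*}
w(E)\le C_1|B|^{1/s}\cdot\frac{w(B)}{|B|}\cdot|E|^{1/{s'}}
=C_1\,w(B)\left(\frac{|E|}{|B|}\right)^{1/{s'}}.
\end{equation*}
Setting $\delta=1/s'=(s-1)/s>0$ and $C=C_1$ gives exactly the claimed bound $w(E)/w(B)\le C(|E|/|B|)^\delta$.

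The only nontrivial ingredient is the reverse H\"older inequality itself; once that is in hand, there is no real obstacle, since the exponent $\delta$ is read off directly from the reverse H\"older exponent and does not require any further optimization. I do not see any need to argue separately according to whether $p=1$ or $p>1$ in the underlying $A_p$ condition, because the reverse H\"older inequality is available for the whole class $A_\infty$ with a uniform statement.
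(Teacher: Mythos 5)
Your argument is correct: the Hölder step $w(E)\le\big(\int_B w^s\big)^{1/s}|E|^{1/s'}$ combined with the reverse Hölder inequality yields $w(E)\le C_1\,w(B)\,(|E|/|B|)^{1/s'}$ with $\delta=1/s'$, exactly as claimed. The paper does not prove this lemma at all — it cites it from Garcia-Cuerva and Rubio de Francia — and your derivation is precisely the standard proof given there, with the reverse Hölder inequality (itself a classical property of $A_\infty$, usually proved via a Calderón--Zygmund stopping-time argument) as the only external ingredient, which is legitimate to invoke.
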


\begin{lemma}[\cite{lerner}]
Let $p_1,\ldots,p_m\in[1,\infty)$ and $1/p=\sum_{k=1}^m 1/{p_k}$. Then $\vec{w}=(w_1,\ldots,w_m)\in A_{\vec{P}}$ if and only if
\begin{equation*}\left\{
\begin{aligned}
&\nu_{\vec{w}}\in A_{mp},\\
&w_i^{1-p'_i}\in A_{mp'_i},\quad i=1,\ldots,m,
\end{aligned}\right.
\end{equation*}
where $\nu_{\vec{w}}=\prod_{i=1}^m w_i^{p/{p_i}}$ and the condition $w_i^{1-p'_i}\in A_{mp'_i}$ in the case $p_i=1$ is understood as $w_i^{1/m}\in A_1$.
\end{lemma}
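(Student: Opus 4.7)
The plan is to prove both directions via Hölder's inequality, exploiting the product structure $\nu_{\vec{w}}=\prod_{i=1}^m w_i^{p/p_i}$. First, for the forward implication, I would derive $\nu_{\vec{w}}\in A_{mp}$ by observing that
\[
\nu_{\vec{w}}^{-1/(mp-1)}=\prod_{i=1}^m w_i^{-p/(p_i(mp-1))},
\]
and applying Hölder with exponents $\alpha_i=p_i(mp-1)/(p(p_i-1))$. A direct check gives $\sum_i 1/\alpha_i=(p/(mp-1))(m-1/p)=1$, and the chosen $\alpha_i$ are precisely calibrated so that the integrand on the $i$-th factor becomes $w_i^{1-p_i'}$. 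Raising to the power $mp-1$ and combining with the average of $\nu_{\vec{w}}$ itself, the left-hand side of the $A_{mp}$ condition is bounded by the $p$-th power of the $A_{\vec{P}}$ supremum appearing in (2.5), which yields the desired bound.

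Next I would show $w_i^{1-p_i'}\in A_{mp_i'}$ for each fixed $i$. Here the trick is a dual rearrangement: setting $v_i=w_i^{1-p_i'}$ and noting $w_j^{p/p_j}=v_j^{-p/p_j'}$, one recasts the $A_{\vec{P}}$ condition as a bound on a product of averages of $v_j$ and of $\nu_{\vec{w}}$. Then a Hölder step analogous to the one above — playing the role of $v_i$ against the remaining factors $\{v_j\}_{j\neq i}$ and $\nu_{\vec{w}}$ — produces the required upper bound on the $A_{mp_i'}$ constant of $v_i$. The boundary case $p_i=1$ needs the convention $\bigl(\tfrac1{|B|}\int w_i^{1-p_i'}\bigr)^{1/p_i'}=(\inf_B w_i)^{-1}$; a short separate argument (based on the same Hölder decomposition together with the reverse Jensen inequality of Lemma~2.2) then converts the $A_{\vec{P}}$ condition into $w_i^{1/m}\in A_1$.

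For the converse, assume the two bulleted conditions. Starting from
\[
\Bigl(\tfrac{1}{|B|}\int_B \nu_{\vec{w}}\Bigr)^{1/p}\prod_{i=1}^m\Bigl(\tfrac{1}{|B|}\int_B w_i^{1-p_i'}\Bigr)^{1/p_i'},
\]
I would write $\nu_{\vec{w}}$ as a product and use reverse Hölder-type manipulations: replace $\bigl(\tfrac1{|B|}\int w_i^{1-p_i'}\bigr)^{1/p_i'}$ by the corresponding factor coming from the $A_{mp_i'}$ condition on $w_i^{1-p_i'}$, and bound the remaining $\nu_{\vec{w}}$-average using $A_{mp}$. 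The exponents match those fixed in the forward direction, so the product telescopes to a finite constant depending only on the $A_{mp}$ and $A_{mp_i'}$ constants. The $p_i=1$ case is again handled by invoking the $A_1$ hypothesis on $w_i^{1/m}$ and observing $\tfrac1{|B|}\int w_i^{1/m}\le C\,\mathrm{ess\,inf}_B w_i^{1/m}$.

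The main obstacle is the exponent arithmetic: one must verify that in every Hölder step the chosen exponents satisfy $\sum 1/\alpha_i=1$ \emph{and} produce, after the subsequent power operation, exactly the exponents $1/p$, $1/p_i'$, $1/(mp)$, $1/(mp_i')$ demanded by the definitions (2.1), (2.5). Any miscalibration collapses the argument. The case $p_i=1$ is a secondary technicality, handled uniformly by the $\mathrm{ess\,inf}$ convention together with Lemma~2.2.
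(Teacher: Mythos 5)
This lemma is not proved in the paper at all---it is quoted from Lerner et al.\ \cite{lerner}---so the comparison is with the standard proof there (whose sufficiency step uses the same mechanism the paper itself displays in the proof of Lemma 3.1). Your necessity direction is essentially that proof, and your exponent calibration is correct: writing $\nu_{\vec w}^{-1/(mp-1)}=\prod_i \big(w_i^{1-p_i'}\big)^{p/(p_i'(mp-1))}$ and applying H\"older with your $\alpha_i$ (indeed $\sum_i 1/\alpha_i=1$) gives $\nu_{\vec w}\in A_{mp}$ with constant controlled by the $A_{\vec P}$ constant, and the dual rearrangement $w_i^{p/p_i}=\nu_{\vec w}\prod_{j\ne i}\big(w_j^{1-p_j'}\big)^{p/p_j'}$ handles $w_i^{1-p_i'}\in A_{mp_i'}$; the endpoint $p_i=1$ also follows from H\"older alone with the $\mathrm{ess\,inf}$ convention, so the appeal to Lemma 2.2 there is unnecessary (though harmless).

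The converse, however, has a genuine gap. ``Reverse H\"older-type manipulations\ldots the product telescopes'' is precisely the step that cannot be done by exponent bookkeeping: from $w_i^{1-p_i'}\in A_{mp_i'}$ you get an upper bound for $\frac1{|B|}\int_B w_i^{1-p_i'}$ by a negative power of the mixed average $\frac1{|B|}\int_B \nu_{\vec w}^{\theta_0}\prod_{j\ne i}\big(w_j^{1-p_j'}\big)^{\theta_j}$, and you would then need to bound that mixed average \emph{from below} by the product of the individual averages---H\"older gives the opposite inequality, and no such lower bound holds for general weights. The missing ingredient is the $A_\infty$ reverse Jensen inequality (Lemma 2.2): since $\nu_{\vec w}\in A_{mp}\subset A_\infty$ and each $w_i^{1-p_i'}\in A_{mp_i'}\subset A_\infty$, one has $\frac1{|B|}\int_B\nu_{\vec w}\le C\exp\big(\frac1{|B|}\int_B\log\nu_{\vec w}\big)$ and likewise for each $w_i^{1-p_i'}$, and the pointwise identity $\nu_{\vec w}^{1/p}\prod_i\big(w_i^{1-p_i'}\big)^{1/p_i'}=\prod_i w_i^{1/p_i-1/p_i}=1$ makes the product of the resulting exponentials collapse to $1$, which is exactly the $A_{\vec P}$ bound (the case $p_i=1$ being handled by the $A_1$ hypothesis on $w_i^{1/m}$ together with Jensen's inequality). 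So your closing remark that the main obstacle is the exponent arithmetic is misplaced: the arithmetic is routine, while the real content of the sufficiency direction is this $A_\infty$ input, which your sketch never supplies; as written, the converse does not go through.
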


\begin{lemma}[\cite{chen,moen}]
Let $0<\alpha<mn$, $p_1,\ldots,p_m\in[1,\infty)$, $1/p=\sum_{k=1}^m 1/{p_k}$ and $1/q=1/p-\alpha/n$. Then $\vec{w}=(w_1,\ldots,w_m)\in A_{\vec{P},q}$ if and only if
\begin{equation*}\left\{
\begin{aligned}
&(\nu_{\vec{w}})^q\in A_{mq},\\
&w_i^{-p'_i}\in A_{mp'_i},\quad i=1,\ldots,m,
\end{aligned}\right.
\end{equation*}
where $\nu_{\vec{w}}=\prod_{i=1}^m w_i$.
\end{lemma}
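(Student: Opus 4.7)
The plan is to prove the two implications of the ``iff'' separately, with the generalized Hölder inequality as the central tool in each, and with the fractional constraint $1/q=1/p-\alpha/n=\sum_i 1/p_i-\alpha/n$ dictating the choice of Hölder exponents.

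For the forward direction, suppose $\vec w\in A_{\vec P,q}$. To obtain $(\nu_{\vec w})^q\in A_{mq}$, the idea is to factor $\nu_{\vec w}^{-q/(mq-1)}=\prod_{i=1}^m w_i^{-q/(mq-1)}$ and apply generalized Hölder with exponents $s_i=p'_i(mq-1)/q$. A short computation shows $\sum_i 1/s_i=1-q\alpha/\bigl(n(mq-1)\bigr)<1$, so the deficit is absorbed by a factor $|B|^{q\alpha/(n(mq-1))}$ coming from inserting a $1$ into the Hölder product. Raising to the $(mq-1)$-st power, multiplying by $\int_B\nu_{\vec w}^q\,dx$, and invoking the $q$-th power of the $A_{\vec P,q}$ hypothesis makes the powers of $|B|$ cancel exactly and the $A_{mq}$ bound drops out. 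For $w_i^{-p'_i}\in A_{mp'_i}$ at each fixed $i$, I would use the identity $w_i=\nu_{\vec w}\prod_{j\neq i}w_j^{-1}$ to rewrite $w_i^{p'_i/(mp'_i-1)}$ as a product involving $\nu_{\vec w}$ and the $w_j$ for $j\neq i$, then apply Hölder with exponents chosen so that $\nu_{\vec w}$ exponentiates to $\nu_{\vec w}^q$ and each $w_j$ exponentiates to $w_j^{-p'_j}$; feeding the resulting bound into the $A_{mp'_i}$ definition closes the argument.

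For the backward direction, the $A_{mq}$ condition rewrites as $\bigl(\tfrac1{|B|}\int_B\nu_{\vec w}^q\bigr)^{1/q}\le C\bigl(\tfrac1{|B|}\int_B\nu_{\vec w}^{-q/(mq-1)}\bigr)^{-(mq-1)/q}$, so the $A_{\vec P,q}$ bound will follow once one shows $\prod_i\bigl(\tfrac1{|B|}\int_B w_i^{-p'_i}\bigr)^{1/p'_i}\le C\,|B|^{-\alpha/n}\bigl(\tfrac1{|B|}\int_B\nu_{\vec w}^{-q/(mq-1)}\bigr)^{(mq-1)/q}$. Direct Hölder applied to $\nu_{\vec w}^{-q/(mq-1)}=\prod_i w_i^{-q/(mq-1)}$ produces the reverse of this inequality, so the argument must instead invoke the reverse Hölder property of $A_\infty$ weights, available by Lemmas 2.2 and 2.3 since $(\nu_{\vec w})^q\in A_{mq}\subset A_\infty$, to self-improve integrability; alternatively, one can use each $A_{mp'_i}$ condition to trade $\int_B w_i^{-p'_i}\,dx$ for a reciprocal of $\int_B w_i^{p'_i/(mp'_i-1)}\,dx$ and reassemble the product by Hölder.

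The main obstacle is the careful bookkeeping of exponents: the constraint $1/q=1/p-\alpha/n$ means the natural Hölder exponents miss summing to $1$ by exactly the fractional defect $\alpha/n$, and every $|B|^{\alpha/n}$ scaling factor has to be tracked so that powers of $|B|$ cancel correctly at the end. The backward direction is the more delicate step, since Hölder naturally points the wrong way and one must appeal to the self-improving $A_\infty$ properties to reverse the inequality. Because the statement is purely weight-theoretic, no reasoning about the fractional integral operator $I_\alpha$ itself enters at this stage.
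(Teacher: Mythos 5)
The paper does not prove this lemma at all: it is imported from \cite{chen,moen}, so there is no in-paper argument to compare against, and your proposal has to be judged against the standard proof of the characterization. Your forward direction is essentially that standard proof, and your exponent bookkeeping does close: Hölder with exponents $s_i=p_i'(mq-1)/q$ plus one extra slot for the constant function absorbs the defect $1-\sum_i 1/s_i=q\alpha/(n(mq-1))$, and after raising to the power $mq-1$ and inserting the $q$-th power of the $A_{\vec P,q}$ condition the powers of $|B|$ cancel precisely because $q/p=1+q\alpha/n$; the analogous factorization $w_i=\nu_{\vec w}\prod_{j\neq i}w_j^{-1}$ handles $w_i^{-p_i'}\in A_{mp_i'}$. (You should still say a word about the endpoint $p_i=1$, where $p_i'=\infty$ and the corresponding factor is interpreted through an essential infimum, as in the convention following (2.7) and in Lemma 2.4.)

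The backward direction, however, has a genuine gap. First, with the paper's normalization (2.7) of $A_{\vec P,q}$ there is no $|B|^{-\alpha/n}$ in the reduction: what is needed is exactly $\prod_i\bigl(\tfrac1{|B|}\int_B w_i^{-p_i'}\bigr)^{1/p_i'}\le C\bigl(\tfrac1{|B|}\int_B\nu_{\vec w}^{-q/(mq-1)}\bigr)^{(mq-1)/q}$, so the extra factor signals a bookkeeping slip. More seriously, neither of your proposed routes yields this inequality as stated. The alternative route fails on exponent count: trading each $\int_B w_i^{-p_i'}$ via the $A_{mp_i'}$ condition leaves you needing $\bigl(\tfrac1{|B|}\int_B\nu_{\vec w}^q\bigr)^{1/q}\le C\prod_i\bigl(\tfrac1{|B|}\int_B w_i^{p_i'/(mp_i'-1)}\bigr)^{(mp_i'-1)/p_i'}$, and reassembling $\nu_{\vec w}^q=\prod_i w_i^q$ by Hölder would require $\sum_i q\,(m-1/p_i')=q(m^2-m+1/p)\le 1$, which is false since already $q/p>1$. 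The mechanism that actually works is the reverse Jensen inequality of Lemma 2.2 applied simultaneously to all $m+1$ weights, not a self-improvement of $(\nu_{\vec w})^q$ alone: since $(\nu_{\vec w})^q\in A_\infty$ and each $w_i^{-p_i'}\in A_\infty$ under the assumed conditions, one has $\bigl(\tfrac1{|B|}\int_B\nu_{\vec w}^q\bigr)^{1/q}\le C\exp\bigl(\tfrac1{|B|}\int_B\log\nu_{\vec w}\bigr)$ and $\bigl(\tfrac1{|B|}\int_B w_i^{-p_i'}\bigr)^{1/p_i'}\le C\exp\bigl(-\tfrac1{|B|}\int_B\log w_i\bigr)$, and upon multiplying these the exponentials cancel exactly because $\log\nu_{\vec w}=\sum_i\log w_i$, giving the $A_{\vec P,q}$ bound with no $|B|$ factors at all (this is the same device the paper exploits in Lemmas 3.1 and 4.1). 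Without this cancellation step your backward implication is not established.
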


Given a weight function $w$ on $\mathbb R^n$, for $0<p<\infty$, the weighted Lebesgue space $L^p(w)$ defined as the set of all functions $f$ such that
\begin{equation}
\big\|f\big\|_{L^p(w)}=\bigg(\int_{\mathbb R^n}|f(x)|^pw(x)\,dx\bigg)^{1/p}<\infty.
\end{equation}
We also denote by $WL^p(w)$ the weighted weak space consisting of all measurable functions $f$ such that
\begin{equation}
\big\|f\big\|_{WL^p(w)}=\sup_{\lambda>0}\lambda\cdot w\big(\big\{x\in\mathbb R^n:|f(x)|>\lambda \big\}\big)^{1/p}<\infty.
\end{equation}

In 2009, Komori and Shirai \cite{komori} first defined the weighted Morrey spaces $L^{p,\kappa}(w)$ for $1\le p<\infty$. In order to deal with the multilinear case $m\ge2$, we shall define $L^{p,\kappa}(w)$ for all $0<p<\infty$.

\newtheorem{defn}[theorem]{Definition}

\begin{defn}
Let $0<p<\infty$, $0<\kappa<1$ and $w$ be a weight function on $\mathbb R^n$. Then the weighted Morrey space is defined by
\begin{equation*}
L^{p,\kappa}(w)=\big\{f\in L^p_{loc}(w):\big\|f\big\|_{L^{p,\kappa}(w)}<\infty\big\},
\end{equation*}
where
\begin{equation*}
\big\|f\big\|_{L^{p,\kappa}(w)}=\sup_B\left(\frac{1}{w(B)^\kappa}\int_B|f(x)|^pw(x)\,dx\right)^{1/p}
\end{equation*}
and the supremum is taken over all balls $B$ in $\mathbb R^n$.
\end{defn}

\begin{defn}
Let $0<p<\infty$, $0<\kappa<1$ and $w$ be a weight function on $\mathbb R^n$. Then the weighted weak Morrey space is defined by
\begin{equation*}
WL^{p,\kappa}(w)=\big\{f \;\mbox{measurable}:\big\|f\big\|_{WL^{p,\kappa}(w)}<\infty\big\},
\end{equation*}
where
\begin{equation*}
\big\|f\big\|_{WL^{p,\kappa}(w)}=\sup_B\sup_{\lambda>0}\frac{1}{w(B)^{\kappa/p}}\lambda\cdot
w\big(\big\{x\in B:|f(x)|>\lambda \big\}\big)^{1/p}.
\end{equation*}
\end{defn}

Furthermore, in order to deal with the fractional order case, we need to consider the weighted Morrey spaces with two weights.

\begin{defn}
Let $0<p<\infty$ and $0<\kappa<1$. Then for two weights $u$ and $v$, the weighted Morrey space is defined by
\begin{equation*}
L^{p,\kappa}(u,v)=\big\{f\in L^p_{loc}(u):\big\|f\big\|_{L^{p,\kappa}(u,v)}<\infty\big\},
\end{equation*}
where
\begin{equation*}
\big\|f\big\|_{L^{p,\kappa}(u,v)}=\sup_{B}\left(\frac{1}{v(B)^{\kappa}}\int_B|f(x)|^pu(x)\,dx\right)^{1/p}.
\end{equation*}
\end{defn}

Throughout this article, we will use $C$ to denote a positive constant, which is independent of the main parameters
and not necessarily the same at each occurrence. Moreover, we will denote the conjugate exponent of $p>1$ by $p'=p/{(p-1)}$.

\section{Proofs of Theorems 1.1 and 1.2}

Before proving the main theorems of this section, we need to establish the following lemma.

\begin{lemma}
Let $m\ge2$, $p_1,\ldots,p_m\in[1,\infty)$ and $p\in(0,\infty)$ with $1/p=\sum_{k=1}^m 1/{p_k}$. Assume that $w_1,\ldots,w_m\in A_\infty$ and $\nu_{\vec{w}}=\prod_{i=1}^m w_i^{p/{p_i}}$, then for any ball $B$, there exists a constant $C>0$ such that
\begin{equation*}
\prod_{i=1}^m\left(\int_B w_i(x)\,dx\right)^{p/{p_i}}\le C\int_B \nu_{\vec{w}}(x)\,dx.
\end{equation*}
\end{lemma}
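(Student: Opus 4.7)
The proof will combine the reverse Jensen inequality (Lemma 2.2), which is available because each $w_i \in A_\infty$, with the ordinary (forward) Jensen inequality for $\log$, which is available for any nonnegative integrable function. The algebraic glue is the identity $\sum_{i=1}^m p/p_i = 1$.

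First I would apply Lemma 2.2 to each weight $w_i$ on the ball $B$, obtaining
\[
\int_B w_i(x)\,dx \le C\,|B|\exp\!\left(\frac{1}{|B|}\int_B \log w_i(x)\,dx\right).
\]
Raising this to the power $p/p_i$ and taking the product over $i = 1,\dots,m$ gives
\[
\prod_{i=1}^m\left(\int_B w_i(x)\,dx\right)^{p/p_i}
\le C\,|B|^{\sum_i p/p_i}\exp\!\left(\frac{1}{|B|}\int_B \sum_{i=1}^m \frac{p}{p_i}\log w_i(x)\,dx\right).
\]
Since $\sum_{i=1}^m p/p_i = p\cdot(1/p) = 1$, the prefactor collapses to $C\,|B|$, and the sum inside the exponential becomes $\log\prod_{i=1}^m w_i^{p/p_i} = \log \nu_{\vec{w}}$, so the right-hand side equals
\[
C\,|B|\exp\!\left(\frac{1}{|B|}\int_B \log \nu_{\vec{w}}(x)\,dx\right).
\]

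To finish, I would apply the forward Jensen inequality to the concave function $\log$, which gives
\[
\exp\!\left(\frac{1}{|B|}\int_B \log \nu_{\vec{w}}(x)\,dx\right) \le \frac{1}{|B|}\int_B \nu_{\vec{w}}(x)\,dx,
\]
and multiplying by $|B|$ yields the desired bound. Nothing here is really an obstacle; the only point to check carefully is the exponent arithmetic $\sum p/p_i = 1$, which is where the hypothesis $1/p = \sum 1/p_i$ enters, ensuring that the $|B|$ factors combine to exactly $|B|$ so that the forward Jensen step produces $\int_B \nu_{\vec w}$ with no leftover $|B|^\alpha$ factor.
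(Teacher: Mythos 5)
Your proposal is correct and follows essentially the same route as the paper's own proof: apply the reverse Jensen inequality of Lemma 2.2 to each $w_i$, use the identity $\sum_{i=1}^m p/p_i=1$ to combine the $|B|$ factors and the exponentials into $\log\nu_{\vec w}$, and finish with the forward Jensen inequality for $\log$. Nothing further is needed.
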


\begin{proof}
Since $w_1,\ldots,w_m\in A_\infty$, then by using Lemma 2.2, we have
\begin{equation*}
\begin{split}
\prod_{i=1}^m \left(\int_B w_i(x)\,dx\right)^{p/{p_i}}&\le C\prod_{i=1}^m
\left(|B|\cdot\exp\bigg(\frac{1}{|B|}\int_B \log w_i(x)\,dx\bigg)\right)^{p/{p_i}}\\
&= C\prod_{i=1}^m \left(|B|^{p/{p_i}}\cdot\exp\bigg(\frac{1}{|B|}\int_B\log w_i(x)^{p/{p_i}}\,dx\bigg)\right)\\
&= C\cdot\big(|B|\big)^{\sum_{i=1}^m p/{p_i}}\cdot\exp\left(\sum_{i=1}^m\frac{1}{|B|}\int_B\log w_i(x)^{p/{p_i}}\,dx\right).
\end{split}
\end{equation*}
Note that $\sum_{i=1}^m p/{p_i}=1$ and $\nu_{\vec{w}}(x)=\prod_{i=1}^m w_i(x)^{p/{p_i}}$. Then by Jensen inequality, we obtain
\begin{equation*}
\begin{split}
\prod_{i=1}^m \left(\int_B w_i(x)\,dx\right)^{p/{p_i}}&\le C\cdot|B|\cdot\exp\left(\frac{1}{|B|}\int_B\log \nu_{\vec{w}}(x)\,dx\right)\\
&\le C\int_B \nu_{\vec{w}}(x)\,dx.
\end{split}
\end{equation*}
We are done.
\end{proof}

\begin{proof}[Proof of Theorem 1.1]
For any ball $B=B(x_0,r_B)\subseteq\mathbb R^n$ and let $f_i=f^0_i+f^{\infty}_i$, where $f^0_i=f_i\chi_{2B}$, $i=1,\ldots,m$ and $\chi_{2B}$ denotes the characteristic function of $2B$. Then we write
\begin{equation*}
\begin{split}
\prod_{i=1}^m f_i(y_i)&=\prod_{i=1}^m\Big(f^0_i(y_i)+f^{\infty}_i(y_i)\Big)\\
&=\sum_{\alpha_1,\ldots,\alpha_m\in\{0,\infty\}}f^{\alpha_1}_1(y_1)\cdots f^{\alpha_m}_m(y_m)\\
&=\prod_{i=1}^m f^0_i(y_i)+\sum\nolimits^\prime f^{\alpha_1}_1(y_1)\cdots f^{\alpha_m}_m(y_m),
\end{split}
\end{equation*}
where each term of $\sum^\prime$ contains at least one $\alpha_i\neq0$. Since $T$ is an $m$-linear operator, then we have
\begin{equation*}
\begin{split}
&\frac{1}{\nu_{\vec{w}}(B)^{\kappa/p}}\left(\int_B \big|T(f_1,\ldots,f_m)(x)\big|^p\nu_{\vec{w}}(x)\,dx\right)^{1/p}\\
\le& \frac{1}{\nu_{\vec{w}}(B)^{\kappa/p}}\left(\int_B \big|T(f^0_1,\ldots,f^0_m)(x)\big|^p\nu_{\vec{w}}(x)\,dx\right)^{1/p}\\
&+\sum\nolimits^\prime\frac{1}{\nu_{\vec{w}}(B)^{\kappa/p}}\left(\int_B \big|T(f^{\alpha_1}_1,\ldots,f^{\alpha_m}_m)(x)\big|^p\nu_{\vec{w}}(x)\,dx\right)^{1/p}\\
=& I^0+\sum\nolimits^\prime I^{\alpha_1,\ldots,\alpha_m}.
\end{split}
\end{equation*}
In view of Lemma 2.4, we have that $\nu_{\vec{w}}\in A_{mp}$. Applying Theorem A, Lemma 3.1 and Lemma 2.1, we get
\begin{equation*}
\begin{split}
I^0&\le C\cdot
\frac{1}{\nu_{\vec{w}}(B)^{\kappa/p}}\prod_{i=1}^m\left(\int_{2B}|f_i(x)|^{p_i}w_i(x)\,dx\right)^{1/{p_i}}\\
&\le C\prod_{i=1}^m\big\|f_i\big\|_{L^{p_i,\kappa}(w_i)}\cdot
\frac{\prod_{i=1}^m w_i(2B)^{\kappa/{p_i}}}{\nu_{\vec{w}}(B)^{\kappa/p}}\\
&\le C\prod_{i=1}^m\big\|f_i\big\|_{L^{p_i,\kappa}(w_i)}\cdot
\frac{\nu_{\vec{w}}(2B)^{\kappa/p}}{\nu_{\vec{w}}(B)^{\kappa/p}}\\
&\le C\prod_{i=1}^m \big\|f_i\big\|_{L^{p_i,\kappa}(w_i)}.
\end{split}
\end{equation*}
For the other terms, let us first consider the case when $\alpha_1=\cdots=\alpha_m=\infty$. By the size condition, for any $x\in B$, we obtain
\begin{align}
\big|T(f^\infty_1,\ldots,f^\infty_m)(x)\big|&\le C\int_{(\mathbb R^n)^m\backslash(2B)^m}
\frac{|f_1(y_1)\cdots f_m(y_m)|}{(|x-y_1|+\cdots+|x-y_m|)^{mn}}dy_1\cdots dy_m\notag\\
&\le C\sum_{j=1}^\infty\int_{(2^{j+1}B)^m\backslash(2^{j}B)^m}
\frac{|f_1(y_1)\cdots f_m(y_m)|}{(|x-y_1|+\cdots+|x-y_m|)^{mn}}dy_1\cdots dy_m\notag\\
&\le C\sum_{j=1}^\infty\prod_{i=1}^m\int_{2^{j+1}B\backslash 2^{j}B}\frac{|f_i(y_i)|}{|x-y_i|^n}dy_i\notag\\
&\le C\sum_{j=1}^\infty\prod_{i=1}^m\frac{1}{|2^{j+1}B|}\int_{2^{j+1}B}\big|f_i(y_i)\big|\,dy_i,
\end{align}
where we have used the notation $E^m=E\times\cdots\times E$. Furthermore, by using H\"older's inequality, the multiple $A_{\vec{P}}$ condition and Lemma 3.1, we deduce that
\begin{equation*}
\begin{split}
\big|T(f^\infty_1,\ldots,f^\infty_m)(x)\big|&\le C\sum_{j=1}^\infty\prod_{i=1}^m
\frac{1}{|2^{j+1}B|}\left(\int_{2^{j+1}B}\big|f_i(y_i)\big|^{p_i}w_i(y_i)\,dy_i\right)^{1/{p_i}}
\left(\int_{2^{j+1}B}w_i(y_i)^{1-p'_i}\,dy_i\right)^{1/{p'_i}}\\
&\le C\sum_{j=1}^\infty\frac{1}{|2^{j+1}B|^m}\cdot
\frac{|2^{j+1}B|^{\frac 1p+\sum_{i=1}^m(1-\frac 1{p_i})}}{\nu_{\vec{w}}(2^{j+1}B)^{1/p}}
\prod_{i=1}^m
\left(\big\|f_i\big\|_{L^{p_i,\kappa}(w_i)}w_i\big(2^{j+1}B\big)^{\kappa/{p_i}}\right)\\
&\le C\prod_{i=1}^m \big\|f_i\big\|_{L^{p_i,\kappa}(w_i)}\cdot\sum_{j=1}^\infty
\left(\frac{\prod_{i=1}^m w_i(2^{j+1}B)^{\kappa/{p_i}}}{\nu_{\vec{w}}(2^{j+1}B)^{1/p}}\right)\\
&\le C\prod_{i=1}^m \big\|f_i\big\|_{L^{p_i,\kappa}(w_i)}\cdot
\sum_{j=1}^\infty\nu_{\vec{w}}\big(2^{j+1}B\big)^{{(\kappa-1)}/p}.
\end{split}
\end{equation*}
Since $\nu_{\vec{w}}\in A_{mp}\subset A_\infty$, then it follows directly from Lemma 2.3 that
\begin{equation}
\frac{\nu_{\vec{w}}(B)}{\nu_{\vec{w}}(2^{j+1}B)}\le C\left(\frac{|B|}{|2^{j+1}B|}\right)^\delta.
\end{equation}
Hence
\begin{equation*}
\begin{split}
I^{\infty,\ldots,\infty}&\le\nu_{\vec{w}}(B)^{{(1-\kappa)}/p}\big|T(f^\infty_1,\ldots,f^\infty_m)(x)\big|\\
&\le C\prod_{i=1}^m \big\|f_i\big\|_{L^{p_i,\kappa}(w_i)}\cdot\sum_{j=1}^\infty
\frac{\nu_{\vec{w}}(B)^{{(1-\kappa)}/p}}{\nu_{\vec{w}}(2^{j+1}B)^{{(1-\kappa)}/p}}\\
&\le C\prod_{i=1}^m \big\|f_i\big\|_{L^{p_i,\kappa}(w_i)}\cdot\sum_{j=1}^\infty
\left(\frac{|B|}{|2^{j+1}B|}\right)^{{\delta(1-\kappa)}/p}\\
&\le C\prod_{i=1}^m \big\|f_i\big\|_{L^{p_i,\kappa}(w_i)},
\end{split}
\end{equation*}
where the last inequality holds since $0<\kappa<1$ and $\delta>0$. We now consider the case where exactly $\ell$ of the $\alpha_i$ are $\infty$ for some $1\le\ell<m$. We
only give the arguments for one of these cases. The rest are similar and can easily be obtained from the arguments below by permuting the indices. Using the size condition again, we deduce that for any $x\in B$,
\begin{align}
\big|T(f^\infty_1,\ldots,f^\infty_\ell,f^0_{\ell+1},\ldots,f^0_m)(x)\big|&\le
C\int_{(\mathbb R^n)^{\ell}\backslash(2B)^{\ell}}\int_{(2B)^{m-\ell}}\frac{|f_1(y_1)\cdots f_m(y_m)|}{(|x-y_1|+\cdots+|x-y_m|)^{mn}}dy_1\cdots dy_m\notag\\
&\le C\prod_{i=\ell+1}^m\int_{2B}\big|f_i(y_i)\big|\,dy_i\notag\\
&\times
\sum_{j=1}^\infty\frac{1}{|2^{j+1}B|^m}\int_{(2^{j+1}B)^\ell\backslash(2^{j}B)^\ell}
\big|f_1(y_1)\cdots f_{\ell}(y_\ell)\big|\,dy_1\cdots dy_\ell\notag\\
&\le C\prod_{i=\ell+1}^m\int_{2B}\big|f_i(y_i)\big|\,dy_i\times\sum_{j=1}^\infty\frac{1}{|2^{j+1}B|^m}\prod_{i=1}^{\ell}
\int_{2^{j+1}B\backslash 2^{j}B}\big|f_i(y_i)\big|\,dy_i\notag\\
&\le C\sum_{j=1}^\infty\prod_{i=1}^m\frac{1}{|2^{j+1}B|}\int_{2^{j+1}B}\big|f_i(y_i)\big|\,dy_i,
\end{align}
and we arrived at the expression considered in the previous case. So for any $x\in B$, we also have
\begin{equation*}
\big|T(f^\infty_1,\ldots,f^\infty_\ell,f^0_{\ell+1},\ldots,f^0_m)(x)\big|\le C\prod_{i=1}^m \big\|f_i\big\|_{L^{p_i,\kappa}(w_i)}\cdot
\sum_{j=1}^\infty\nu_{\vec{w}}\big(2^{j+1}B\big)^{{(\kappa-1)}/p}.
\end{equation*}
Therefore, by the inequality (3.2) and the above pointwise inequality, we have
\begin{equation*}
\begin{split}
I^{\alpha_1,\ldots,\alpha_m}&\le\nu_{\vec{w}}(B)^{{(1-\kappa)}/p}
\big|T(f^\infty_1,\ldots,f^\infty_\ell,f^0_{\ell+1},\ldots,f^0_m)(x)\big|\\
&\le C\prod_{i=1}^m \big\|f_i\big\|_{L^{p_i,\kappa}(w_i)}\cdot\sum_{j=1}^\infty
\frac{\nu_{\vec{w}}(B)^{{(1-\kappa)}/p}}{\nu_{\vec{w}}(2^{j+1}B)^{{(1-\kappa)}/p}}\\
\end{split}
\end{equation*}
\begin{equation*}
\begin{split}
&\le C\prod_{i=1}^m \big\|f_i\big\|_{L^{p_i,\kappa}(w_i)}\cdot\sum_{j=1}^\infty
\left(\frac{|B|}{|2^{j+1}B|}\right)^{{\delta(1-\kappa)}/p}\\
&\le C\prod_{i=1}^m \big\|f_i\big\|_{L^{p_i,\kappa}(w_i)}.
\end{split}
\end{equation*}
Combining the above estimates and then taking the supremum over all balls $B\subseteq\mathbb R^n$, we complete the proof of Theorem 1.1.
\end{proof}

\begin{proof}[Proof of Theorem 1.2]
For any ball $B=B(x_0,r_B)\subseteq\mathbb R^n$ and decompose $f_i=f^0_i+f^{\infty}_i$, where $f^0_i=f_i\chi_{2B}$, $i=1,\ldots,m$. Then for any given $\lambda>0$, we can write
\begin{equation*}
\begin{split}
&\nu_{\vec{w}}\big(\big\{x\in B:\big|T(f_1,\ldots,f_m)\big|>\lambda\big\}\big)^{1/p}\\
\le&\nu_{\vec{w}}\big(\big\{x\in B:\big|T(f^0_1,\ldots,f^0_m)\big|>\lambda/{2^m}\big\}\big)^{1/p}
+\sum\nolimits^\prime \nu_{\vec{w}}\big(\big\{x\in B:\big|T(f^{\alpha_1}_1,\ldots,f^{\alpha_m}_m)\big|>\lambda/{2^m}\big\}\big)^{1/p}\\
=&I^0_*+\sum\nolimits^\prime I^{\alpha_1,\ldots,\alpha_m}_*,
\end{split}
\end{equation*}
where each term of $\sum^\prime$ contains at least one $\alpha_i\neq0$. By Lemma 2.4 again, we know that $\nu_{\vec{w}}\in A_{mp}$ with $1\le mp<\infty$.
Applying Theorem B, Lemma 3.1 and Lemma 2.1, we have
\begin{equation*}
\begin{split}
I^0_*&\le\frac{C}{\lambda}\prod_{i=1}^m\left(\int_{2B}|f_i(x)|^{p_i}w_i(x)\,dx\right)^{1/{p_i}}\\
&\le \frac{C\cdot\prod_{i=1}^m w_i(2B)^{\kappa/{p_i}}}{\lambda}\prod_{i=1}^m\big\|f_i\big\|_{L^{p_i,\kappa}(w_i)}\\
&\le \frac{C\cdot\nu_{\vec{w}}(2B)^{\kappa/p}}{\lambda}\prod_{i=1}^m\big\|f_i\big\|_{L^{p_i,\kappa}(w_i)}\\
&\le \frac{C\cdot\nu_{\vec{w}}(B)^{\kappa/p}}{\lambda}\prod_{i=1}^m \big\|f_i\big\|_{L^{p_i,\kappa}(w_i)}.
\end{split}
\end{equation*}
In the proof of Theorem 1.1, we have already showed the following pointwise estimate (see (3.1) and (3.3)).
\begin{equation}
\big|T(f^{\alpha_1}_1,\ldots,f^{\alpha_m}_m)(x)\big|\le C\sum_{j=1}^\infty\prod_{i=1}^m\frac{1}{|2^{j+1}B|}\int_{2^{j+1}B}\big|f_i(y_i)\big|\,dy_i.
\end{equation}
Without loss of generality, we may assume that $p_1=\cdots=p_{\ell}=\min\{p_1,\ldots,p_m\}=1$, and $p_{\ell+1},\ldots,p_m>1$. Using H\"older's inequality, the multiple $A_{\vec{P}}$ condition and Lemma 3.1, we obtain
\begin{equation*}
\begin{split}
\big|T(f^{\alpha_1}_1,\ldots,f^{\alpha_m}_m)(x)\big|&\le
C\sum_{j=1}^\infty\prod_{i=1}^{\ell}\frac{1}{|2^{j+1}B|}\int_{2^{j+1}B}\big|f_i(y_i)\big|\,dy_i\times
\prod_{i=\ell+1}^{m}\frac{1}{|2^{j+1}B|}\int_{2^{j+1}B}\big|f_i(y_i)\big|\,dy_i\\
&\le C\sum_{j=1}^\infty\prod_{i=1}^{\ell}\frac{1}{|2^{j+1}B|}\int_{2^{j+1}B}\big|f_i(y_i)\big|w_i(y_i)\,dy_i
\left(\inf_{y_i\in 2^{j+1}B}w_i(y_i)\right)^{-1}\\
\end{split}
\end{equation*}
\begin{equation*}
\begin{split}
&\times\prod_{i=\ell+1}^{m}\frac{1}{|2^{j+1}B|}
\left(\int_{2^{j+1}B}\big|f_i(y_i)\big|^{p_i}w_i(y_i)\,dy_i\right)^{1/{p_i}}
\left(\int_{2^{j+1}B}w_i(y_i)^{1-p'_i}\,dy_i\right)^{1/{p'_i}}\\
&\le C\prod_{i=1}^m \big\|f_i\big\|_{L^{p_i,\kappa}(w_i)}
\sum_{j=1}^\infty\nu_{\vec{w}}\big(2^{j+1}B\big)^{{(\kappa-1)}/p}.
\end{split}
\end{equation*}
Observe that $\nu_{\vec{w}}\in A_{mp}$ with $1\le mp<\infty$. Thus, it follows from the inequality (3.2) that for any $x\in B$,
\begin{align}
\big|T(f^{\alpha_1}_1,\ldots,f^{\alpha_m}_m)(x)\big|
&= C\prod_{i=1}^m \big\|f_i\big\|_{L^{p_i,\kappa}(w_i)}\cdot\frac{1}{\nu_{\vec{w}}(B)^{{(1-\kappa)}/p}}
\sum_{j=1}^\infty\frac{\nu_{\vec{w}}(B)^{{(1-\kappa)}/p}}{\nu_{\vec{w}}(2^{j+1}B)^{{(1-\kappa)}/p}}\notag\\
&\le C\prod_{i=1}^m \big\|f_i\big\|_{L^{p_i,\kappa}(w_i)}\cdot\frac{1}{\nu_{\vec{w}}(B)^{{(1-\kappa)}/p}}
\sum_{j=1}^\infty\left(\frac{|B|}{|2^{j+1}B|}\right)^{{\delta(1-\kappa)}/p}\notag\\
&\le C\prod_{i=1}^m \big\|f_i\big\|_{L^{p_i,\kappa}(w_i)}\cdot\frac{1}{\nu_{\vec{w}}(B)^{{(1-\kappa)}/p}}.
\end{align}
If $\big\{x\in B:\big|T(f^{\alpha_1}_1,\ldots,f^{\alpha_m}_m)(x)\big|>\lambda/{2^m}\big\}=\O$, then the inequality
\begin{equation*}
I^{\alpha_1,\ldots,\alpha_m}_*\le\frac{C\cdot\nu_{\vec{w}}(B)^{\kappa/p}}{\lambda}\prod_{i=1}^m \big\|f_i\big\|_{L^{p_i,\kappa}(w_i)}
\end{equation*}
holds trivially. Now if instead we suppose that $\big\{x\in B:\big|T(f^{\alpha_1}_1,\ldots,f^{\alpha_m}_m)(x)\big|>\lambda/{2^m}\big\}\neq\O$, then by the pointwise inequality (3.5), we have
\begin{equation*}
\lambda< C\prod_{i=1}^m \big\|f_i\big\|_{L^{p_i,\kappa}(w_i)}\cdot\frac{1}{\nu_{\vec{w}}(B)^{{(1-\kappa)}/p}},
\end{equation*}
which is equivalent to
\begin{equation*}
\nu_{\vec{w}}(B)^{1/p}\le\frac{C\cdot\nu_{\vec{w}}(B)^{\kappa/p}}{\lambda}\prod_{i=1}^m \big\|f_i\big\|_{L^{p_i,\kappa}(w_i)}.
\end{equation*}
Therefore
\begin{equation*}
I^{\alpha_1,\ldots,\alpha_m}_*\le
\nu_{\vec{w}}(B)^{1/p}\le\frac{C\cdot\nu_{\vec{w}}(B)^{\kappa/p}}{\lambda}\prod_{i=1}^m \big\|f_i\big\|_{L^{p_i,\kappa}(w_i)}.
\end{equation*}
Summing up all the above estimates and then taking the supremum over all balls $B\subseteq\mathbb R^n$ and all $\lambda>0$, we complete the proof of Theorem 1.2.
\end{proof}

By using H\"older's inequality, it is easy to check that if each $w_i$ is in $A_{p_i}$, then
\begin{equation*}
\prod_{i=1}^m A_{p_i}\subset A_{\vec{P}}.
\end{equation*}
and this inclusion is strict (see \cite{lerner}). Thus, as direct consequences of Theorems
1.1 and 1.2, we immediately obtain the following

\newtheorem{corollary}[theorem]{Corollary}

\begin{corollary}
Let $m\ge2$ and $T$ be an $m$-linear Calder\'on--Zygmund operator. If $p_1,\ldots,p_m\in(1,\infty)$ and $p\in(0,\infty)$ with $1/p=\sum_{k=1}^m 1/{p_k}$, and $\vec{w}=(w_1,\ldots,w_m)\in \prod_{i=1}^m A_{p_i}$, then for any $0<\kappa<1$, there exists a
constant $C>0$ independent of $\vec{f}=(f_1,\ldots,f_m)$ such that
\begin{equation*}
\big\|T(\vec{f})\big\|_{L^{p,\kappa}(\nu_{\vec{w}})}\le C\prod_{i=1}^m\big\|f_i\big\|_{L^{p_i,\kappa}(w_i)},
\end{equation*}
where $\nu_{\vec{w}}=\prod_{i=1}^m w_i^{p/{p_i}}$.
\end{corollary}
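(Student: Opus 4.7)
The plan is to obtain the Corollary as an immediate consequence of Theorem 1.1, once the product embedding $\prod_{i=1}^m A_{p_i}\subset A_{\vec{P}}$ noted just above the statement is spelled out. Concretely, I would proceed in two short steps: first verify that both hypotheses of Theorem 1.1 (namely $\vec{w}\in A_{\vec{P}}$ and $w_i\in A_\infty$ for every $i$) follow automatically from the stronger assumption that $w_i\in A_{p_i}$ for each $i=1,\ldots,m$; then invoke Theorem 1.1 directly.

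For the inclusion $\prod_{i=1}^m A_{p_i}\subset A_{\vec{P}}$, the key identity is $\nu_{\vec{w}}=\prod_{i=1}^m w_i^{p/p_i}$ with $\sum_{i=1}^m p/p_i=1$. Applying H\"older's inequality with exponents $p_i/p$ to the product $\prod_{i=1}^m w_i^{p/p_i}$ yields $\bigl(\frac{1}{|B|}\int_B \nu_{\vec{w}}\bigr)^{1/p}\le\prod_{i=1}^m\bigl(\frac{1}{|B|}\int_B w_i\bigr)^{1/p_i}$. Rewriting each single-weight $A_{p_i}$ characteristic in the equivalent form $\bigl(\frac{1}{|B|}\int_B w_i\bigr)^{1/p_i}\bigl(\frac{1}{|B|}\int_B w_i^{1-p'_i}\bigr)^{1/p'_i}\le C_i^{1/p_i}$ (using the identity $(p_i-1)/p_i=1/p'_i$ and the fact that $w_i^{1-p'_i}=w_i^{-1/(p_i-1)}$) and multiplying over $i$ then bounds the full $A_{\vec{P}}$ expression by $\prod_{i=1}^m C_i^{1/p_i}$, uniformly in $B$. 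The second condition $w_i\in A_\infty$ is immediate since $A_{p_i}\subset A_\infty$ whenever $p_i\in(1,\infty)$, directly from the definition of $A_\infty$ recalled in Section~2.

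With both hypotheses of Theorem 1.1 verified, a single application of that theorem produces exactly the asserted estimate $\|T(\vec{f})\|_{L^{p,\kappa}(\nu_{\vec{w}})}\le C\prod_{i=1}^m\|f_i\|_{L^{p_i,\kappa}(w_i)}$, which is the content of the Corollary. There is no substantive obstacle here: the Corollary is strictly a weakening of Theorem 1.1, recorded for the convenience of readers who prefer the classical one-weight hypothesis $w_i\in A_{p_i}$ to the more refined multilinear $A_{\vec{P}}$ framework.
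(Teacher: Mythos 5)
Your proposal is correct and follows exactly the paper's route: the paper also deduces the corollary from Theorem 1.1 via the inclusion $\prod_{i=1}^m A_{p_i}\subset A_{\vec{P}}$ (proved by H\"older's inequality, just as you do) together with the trivial observation that $A_{p_i}\subset A_\infty$. The only difference is that the paper leaves the H\"older computation as an ``easy to check'' remark with a reference to Lerner et al., whereas you write it out; your computation is accurate.
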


\begin{corollary}
Let $m\ge2$ and $T$ be an $m$-linear Calder\'on--Zygmund operator. If $p_1,\ldots,p_m\in[1,\infty)$, $\min\{p_1,\ldots,p_m\}=1$ and $p\in(0,\infty)$ with $1/p=\sum_{k=1}^m 1/{p_k}$, and $\vec{w}=(w_1,\ldots,w_m)\in \prod_{i=1}^m A_{p_i}$, then for any $0<\kappa<1$, there exists a
constant $C>0$ independent of $\vec{f}=(f_1,\ldots,f_m)$ such that
\begin{equation*}
\big\|T(\vec{f})\big\|_{WL^{p,\kappa}(\nu_{\vec{w}})}\le C\prod_{i=1}^m\big\|f_i\big\|_{L^{p_i,\kappa}(w_i)},
\end{equation*}
where $\nu_{\vec{w}}=\prod_{i=1}^m w_i^{p/{p_i}}$.
\end{corollary}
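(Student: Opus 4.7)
The plan is to derive this corollary directly from Theorem 1.2, since the stronger hypothesis $\vec{w}\in\prod_{i=1}^m A_{p_i}$ already supplies the two ingredients required there: membership of $\vec{w}$ in the multilinear class $A_{\vec{P}}$, and membership of each factor weight $w_i$ in $A_\infty$. The second point is free, because $A_{p_i}\subset A_\infty$ by the definition of $A_\infty$ recalled in Section 2, so nothing has to be done on that front.

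The one substantive step is to verify the inclusion $\prod_{i=1}^m A_{p_i}\subset A_{\vec{P}}$ noted just before the corollary. I would argue it with H\"older's inequality using the exponents $\{p_i/p\}_{i=1}^m$, which are admissible because $\sum_{i=1}^m p/p_i=1$. Applied to $\nu_{\vec{w}}=\prod_i w_i^{p/p_i}$ this gives
$$\left(\frac{1}{|B|}\int_B\nu_{\vec{w}}(x)\,dx\right)^{1/p}\le\prod_{i=1}^m\left(\frac{1}{|B|}\int_B w_i(x)\,dx\right)^{1/p_i}.$$
Multiplying both sides by $\prod_{i=1}^m\bigl(\frac{1}{|B|}\int_B w_i(x)^{1-p_i'}\,dx\bigr)^{1/p_i'}$ and regrouping factor by factor, the right-hand side becomes a product, over $i$, of the $A_{p_i}$ characteristic of $w_i$ raised to the power $1/p_i$, since the identity $(p_i-1)/p_i=1/p_i'$ lines up the exponents correctly. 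Taking the supremum over balls $B$ therefore yields a finite bound, so $\vec{w}\in A_{\vec{P}}$.

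The only subtlety is at endpoint indices with $p_i=1$: there the factor $\bigl(\frac{1}{|B|}\int_B w_i^{1-p_i'}\bigr)^{1/p_i'}$ is, by the convention fixed in Section 2, interpreted as $(\inf_B w_i)^{-1}$, and the $A_1$ inequality $\frac{1}{|B|}\int_B w_i\le C\inf_B w_i$ produces exactly the same uniformly bounded factor in the product. There is no genuine obstacle in the argument; it is pure index bookkeeping. With $\vec{w}\in A_{\vec{P}}$ and $w_i\in A_\infty$ both established, Theorem 1.2 applied with the same data $\vec{f}$ delivers precisely the claimed weak-type Morrey estimate, and the corollary follows.
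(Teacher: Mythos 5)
Your proposal is correct and follows exactly the route the paper takes: the corollary is deduced from Theorem 1.2 via the inclusion $\prod_{i=1}^m A_{p_i}\subset A_{\vec{P}}$ (which the paper asserts via H\"older's inequality, citing \cite{lerner}, and which you verify explicitly, including the $p_i=1$ convention) together with the elementary nesting $A_{p_i}\subset A_\infty$. Nothing further is needed.
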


\section{Proofs of Theorems 1.3 and 1.4}

Following along the same lines as that of Lemma 3.1, we can also show the following result, which plays an important role in our proofs of Theorems 1.3 and 1.4.

\begin{lemma}
Let $m\ge2$, $q_1,\ldots,q_m\in[1,\infty)$ and $q\in(0,\infty)$ with $1/q=\sum_{k=1}^m 1/{q_k}$. Assume that $w^{q_1}_1,\ldots,w^{q_m}_m\in A_\infty$ and $\nu_{\vec{w}}=\prod_{i=1}^m w_i$, then for any ball $B$, there exists a constant $C>0$ such that
\begin{equation*}
\prod_{i=1}^m\left(\int_B w^{q_i}_i(x)\,dx\right)^{q/{q_i}}\le C\int_B\nu_{\vec{w}}(x)^q\,dx.
\end{equation*}
\end{lemma}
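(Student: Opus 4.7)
The plan is to mirror the proof of Lemma 3.1 essentially verbatim, with the substitutions $w_i \leadsto w_i^{q_i}$ and $p/p_i \leadsto q/q_i$. The hypothesis $w_i^{q_i} \in A_\infty$ is exactly the input needed to apply the reverse Jensen inequality (Lemma 2.2) to each of the weights $w_i^{q_i}$, and the exponent identity $\sum_{i=1}^m q/q_i = 1$ plays the same structural role that $\sum_{i=1}^m p/p_i = 1$ played in Lemma 3.1.

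Concretely, I would first apply Lemma 2.2 to each $w_i^{q_i}$ and raise the resulting inequality to the power $q/q_i$, obtaining
\[
\left(\int_B w_i^{q_i}(x)\,dx\right)^{q/q_i} \le C\,|B|^{q/q_i}\exp\!\left(\frac{1}{|B|}\int_B \log w_i(x)^{q}\,dx\right),
\]
where I have used $(q/q_i)\log w_i^{q_i} = q\log w_i$ to move the exponent inside the logarithm. Taking the product over $i=1,\ldots,m$ and invoking $\sum_{i=1}^m q/q_i = 1$ together with $\nu_{\vec{w}} = \prod_{i=1}^m w_i$ (so $\sum_i \log w_i^q = \log \nu_{\vec{w}}^{\,q}$), the right-hand side collapses to
\[
C\,|B|\,\exp\!\left(\frac{1}{|B|}\int_B \log \nu_{\vec{w}}(x)^{q}\,dx\right).
\]
A final application of Jensen's inequality for the exponential then yields the desired bound $C\int_B \nu_{\vec{w}}(x)^q\,dx$.

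There is no real obstacle here: the proof is a direct transcription of the argument for Lemma 3.1, and the only point requiring a moment of care is the bookkeeping of the exponents, i.e., verifying that raising the reverse Jensen inequality for $w_i^{q_i}$ to the power $q/q_i$ really produces $q\log w_i$ inside the exponential, so that the sum over $i$ reconstructs $\log \nu_{\vec{w}}^{\,q}$ rather than some other power. Once this is observed, the chain from reverse Jensen to ordinary Jensen closes the argument in two displays.
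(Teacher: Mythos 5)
Your proof is correct and is exactly the argument the paper intends: the paper gives no separate proof of Lemma 4.1, stating only that it follows "along the same lines" as Lemma 3.1, and your transcription (reverse Jensen applied to each $w_i^{q_i}\in A_\infty$, raised to the power $q/q_i$, multiplied using $\sum_i q/q_i=1$, then ordinary Jensen) is precisely that adaptation. The exponent bookkeeping you highlight, namely $(q/q_i)\log w_i^{q_i}=q\log w_i$ so that the sum reconstructs $\log\nu_{\vec{w}}^{\,q}$, is the only point of substance and you have it right.
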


\begin{proof}[Proof of Theorem 1.3]
Arguing as in the proof of Theorem 1.1, fix a ball $B=B(x_0,r_B)\subseteq\mathbb R^n$ and decompose $f_i=f^0_i+f^{\infty}_i$, where $f^0_i=f_i\chi_{2B}$, $i=1,\ldots,m$. Since $I_\alpha$ is an $m$-linear operator, then we have
\begin{equation*}
\begin{split}
&\frac{1}{\nu_{\vec{w}}^q(B)^{\kappa/p}}\left(\int_B \big|I_\alpha(f_1,\ldots,f_m)(x)\big|^q\nu_{\vec{w}}(x)^q\,dx\right)^{1/q}\\
\le& \frac{1}{\nu_{\vec{w}}^q(B)^{\kappa/p}}\left(\int_B \big|I_\alpha (f^0_1,\ldots,f^0_m)(x)\big|^q\nu_{\vec{w}}(x)^q\,dx\right)^{1/q}\\
&+\sum\nolimits^\prime\frac{1}{\nu_{\vec{w}}^q(B)^{\kappa/p}}\left(\int_B \big|I_\alpha(f^{\alpha_1}_1,\ldots,f^{\alpha_m}_m)(x)\big|^q\nu_{\vec{w}}(x)^q\,dx\right)^{1/q}\\
=& J^0+\sum\nolimits^\prime J^{\alpha_1,\ldots,\alpha_m},
\end{split}
\end{equation*}
where each term of $\sum^\prime$ contains at least one $\alpha_i\neq0$. In view of Lemma 2.5, we can see that $(\nu_{\vec{w}})^q\in A_{mq}$. Using Theorem C, Lemma 4.1 and Lemma 2.1, we get
\begin{equation*}
\begin{split}
J^0&\le C\cdot
\frac{1}{\nu_{\vec{w}}^q(B)^{\kappa/p}}\prod_{i=1}^m\left(\int_{2B}|f_i(x)|^{p_i}w_i(x)^{p_i}\,dx\right)^{1/{p_i}}\\
&\le C\prod_{i=1}^m\big\|f_i\big\|_{L^{p_i,{\kappa p_iq}/{pq_i}}(w^{p_i}_i,w^{q_i}_i)}\cdot
\frac{\prod_{i=1}^m w^{q_i}_i(2B)^{{\kappa q}/{pq_i}}}{\nu_{\vec{w}}^q(B)^{\kappa/p}}\\
&= C\prod_{i=1}^m\big\|f_i\big\|_{L^{p_i,{\kappa p_iq}/{pq_i}}(w^{p_i}_i,w^{q_i}_i)}\cdot
\frac{\Big(\prod_{i=1}^m w^{q_i}_i(2B)^{q/{q_i}}\Big)^{\kappa/p}}{\nu_{\vec{w}}^q(B)^{\kappa/p}}\\
&\le C\prod_{i=1}^m\big\|f_i\big\|_{L^{p_i,{\kappa p_iq}/{pq_i}}(w^{p_i}_i,w^{q_i}_i)}\cdot
\frac{\nu_{\vec{w}}^q(2B)^{\kappa/p}}{\nu_{\vec{w}}^q(B)^{\kappa/p}}\\
&\le C\prod_{i=1}^m \big\|f_i\big\|_{L^{p_i,{\kappa p_iq}/{pq_i}}(w^{p_i}_i,w^{q_i}_i)}.
\end{split}
\end{equation*}
For the other terms, let us first deal with the case when $\alpha_1=\cdots=\alpha_m=\infty$. By the definition of $I_\alpha$, for any $x\in B$, we obtain
\begin{align}
\big|I_{\alpha}(f^\infty_1,\ldots,f^\infty_m)(x)\big|&= \int_{(\mathbb R^n)^m\backslash(2B)^m}
\frac{|f_1(y_1)\cdots f_m(y_m)|}{(|x-y_1|+\cdots+|x-y_m|)^{mn-\alpha}}dy_1\cdots dy_m\notag\\
&= \sum_{j=1}^\infty\int_{(2^{j+1}B)^m\backslash(2^{j}B)^m}
\frac{|f_1(y_1)\cdots f_m(y_m)|}{(|x-y_1|+\cdots+|x-y_m|)^{mn-\alpha}}dy_1\cdots dy_m\notag\\
&\le C\sum_{j=1}^\infty\prod_{i=1}^m\int_{2^{j+1}B\backslash 2^{j}B}\frac{|f_i(y_i)|}{|x-y_i|^{n-\alpha/m}}dy_i\notag\\
&\le C\sum_{j=1}^\infty\prod_{i=1}^m\frac{1}{|2^{j+1}B|^{1-\alpha/{mn}}}\int_{2^{j+1}B}\big|f_i(y_i)\big|\,dy_i.
\end{align}
Moreover, by using H\"older's inequality, the multiple $A_{\vec{P},q}$ condition and Lemma 4.1, we deduce that
\begin{equation*}
\begin{split}
\big|I_{\alpha}(f^\infty_1,\ldots,f^\infty_m)(x)\big|&\le C\sum_{j=1}^\infty\prod_{i=1}^m\frac{1}{|2^{j+1}B|^{1-\alpha/{mn}}}
\left(\int_{2^{j+1}B}\big|f_i(y_i)\big|^{p_i}w_i(y_i)^{p_i}\,dy_i\right)^{1/{p_i}}\\
&\times\left(\int_{2^{j+1}B}w_i(y_i)^{-p'_i}\,dy_i\right)^{1/{p'_i}}\\
&\le C\sum_{j=1}^\infty\frac{1}{|2^{j+1}B|^{m-\alpha/{n}}}\cdot
\frac{|2^{j+1}B|^{\frac 1q+\sum_{i=1}^m(1-\frac 1{p_i})}}{\nu_{\vec{w}}^q(2^{j+1}B)^{1/q}}\\
\end{split}
\end{equation*}
\begin{equation*}
\begin{split}
&\times\prod_{i=1}^m
\left(\big\|f_i\big\|_{L^{p_i,{\kappa p_iq}/{pq_i}}(w^{p_i}_i,w^{q_i}_i)}w^{q_i}_i\big(2^{j+1}B\big)^{{\kappa q}/{pq_i}}\right)\\
&\le C\prod_{i=1}^m \big\|f_i\big\|_{L^{p_i,{\kappa p_iq}/{pq_i}}(w^{p_i}_i,w^{q_i}_i)}\cdot
\sum_{j=1}^\infty\Bigg[\frac{\Big(\prod_{i=1}^m w^{q_i}_i(2^{j+1}B)^{q/{q_i}}\Big)^{\kappa/p}}{\nu_{\vec{w}}^q(2^{j+1}B)^{1/q}}\Bigg]\\
&\le C\prod_{i=1}^m \big\|f_i\big\|_{L^{p_i,{\kappa p_iq}/{pq_i}}(w^{p_i}_i,w^{q_i}_i)}\cdot
\sum_{j=1}^\infty\nu_{\vec{w}}^q\big(2^{j+1}B\big)^{\kappa/p-1/q}.
\end{split}
\end{equation*}
Since $(\nu_{\vec{w}})^q\in A_{mq}\subset A_\infty$, then it follows immediately from Lemma 2.3 that
\begin{equation}
\frac{\nu_{\vec{w}}^q(B)}{\nu_{\vec{w}}^q(2^{j+1}B)}\le C\left(\frac{|B|}{|2^{j+1}B|}\right)^{\delta^\prime}.
\end{equation}
Hence
\begin{equation*}
\begin{split}
J^{\infty,\ldots,\infty}&\le\nu_{\vec{w}}^q(B)^{1/q-\kappa/p}\big|I_{\alpha}(f^\infty_1,\ldots,f^\infty_m)(x)\big|\\
&\le C\prod_{i=1}^m \big\|f_i\big\|_{L^{p_i,{\kappa p_iq}/{pq_i}}(w^{p_i}_i,w^{q_i}_i)}\cdot\sum_{j=1}^\infty
\frac{\nu_{\vec{w}}^q(B)^{1/q-\kappa/p}}{\nu_{\vec{w}}^q(2^{j+1}B)^{1/q-\kappa/p}}\\
&\le C\prod_{i=1}^m \big\|f_i\big\|_{L^{p_i,{\kappa p_iq}/{pq_i}}(w^{p_i}_i,w^{q_i}_i)}\cdot\sum_{j=1}^\infty
\left(\frac{|B|}{|2^{j+1}B|}\right)^{\delta^\prime(1/q-\kappa/p)}\\
&\le C\prod_{i=1}^m \big\|f_i\big\|_{L^{p_i,{\kappa p_iq}/{pq_i}}(w^{p_i}_i,w^{q_i}_i)},
\end{split}
\end{equation*}
where in the last inequality we have used the fact that $0<\kappa<p/q$ and $\delta^\prime>0$. We now consider the case where exactly $\ell$ of the $\alpha_i$ are $\infty$ for some $1\le\ell<m$. We
only give the arguments for one of these cases. The rest are similar and can easily be obtained from the arguments below by permuting the indices. Using the definition of $I_\alpha$ again, we can see that for any $x\in B$,
\begin{align}
\big|I_{\alpha}(f^\infty_1,\ldots,f^\infty_\ell,f^0_{\ell+1},\ldots,f^0_m)(x)\big|&=\int_{(\mathbb R^n)^{\ell}\backslash(2B)^{\ell}}\int_{(2B)^{m-\ell}}\frac{|f_1(y_1)\cdots f_m(y_m)|}{(|x-y_1|+\cdots+|x-y_m|)^{mn-\alpha}}dy_1\cdots dy_m\notag\\
&\le C\prod_{i=\ell+1}^m\int_{2B}\big|f_i(y_i)\big|\,dy_i\notag\\
&\times
\sum_{j=1}^\infty\frac{1}{|2^{j+1}B|^{m-\alpha/n}}\int_{(2^{j+1}B)^\ell\backslash(2^{j}B)^\ell}
\big|f_1(y_1)\cdots f_{\ell}(y_\ell)\big|\,dy_1\cdots dy_\ell\notag\\
&\le C\prod_{i=\ell+1}^m\int_{2B}\big|f_i(y_i)\big|\,dy_i\times
\sum_{j=1}^\infty\frac{1}{|2^{j+1}B|^{m-\alpha/n}}\prod_{i=1}^{\ell}
\int_{2^{j+1}B\backslash 2^{j}B}\big|f_i(y_i)\big|\,dy_i\notag\\
&\le C\sum_{j=1}^\infty\prod_{i=1}^m\frac{1}{|2^{j+1}B|^{1-\alpha/{mn}}}\int_{2^{j+1}B}\big|f_i(y_i)\big|\,dy_i,
\end{align}
and we arrived at the expression considered in the previous case. Thus, for any $x\in B$, we also have
\begin{equation*}
\big|I_{\alpha}(f^\infty_1,\ldots,f^\infty_\ell,f^0_{\ell+1},\ldots,f^0_m)(x)\big|\le C\prod_{i=1}^m \big\|f_i\big\|_{L^{p_i,{\kappa p_iq}/{pq_i}}(w^{p_i}_i,w^{q_i}_i)}\cdot
\sum_{j=1}^\infty\nu_{\vec{w}}^q\big(2^{j+1}B\big)^{\kappa/p-1/q}.
\end{equation*}
Therefore, by the inequality (4.2) and the above pointwise inequality, we obtain
\begin{equation*}
\begin{split}
J^{\alpha_1,\ldots,\alpha_m}&\le\nu_{\vec{w}}^q(B)^{1/q-\kappa/p}
\big|I_{\alpha}(f^\infty_1,\ldots,f^\infty_\ell,f^0_{\ell+1},\ldots,f^0_m)(x)\big|\\
&\le C\prod_{i=1}^m \big\|f_i\big\|_{L^{p_i,{\kappa p_iq}/{pq_i}}(w^{p_i}_i,w^{q_i}_i)}\cdot\sum_{j=1}^\infty
\frac{\nu_{\vec{w}}^q(B)^{1/q-\kappa/p}}{\nu_{\vec{w}}^q(2^{j+1}B)^{1/q-\kappa/p}}\\
&\le C\prod_{i=1}^m \big\|f_i\big\|_{L^{p_i,{\kappa p_iq}/{pq_i}}(w^{p_i}_i,w^{q_i}_i)}\cdot\sum_{j=1}^\infty
\left(\frac{|B|}{|2^{j+1}B|}\right)^{\delta^\prime(1/q-\kappa/p)}\\
&\le C\prod_{i=1}^m \big\|f_i\big\|_{L^{p_i,{\kappa p_iq}/{pq_i}}(w^{p_i}_i,w^{q_i}_i)}.
\end{split}
\end{equation*}
Summarizing the estimates derived above and then taking the supremum over all balls $B\subseteq\mathbb R^n$, we finish the proof of Theorem 1.3.
\end{proof}

\begin{proof}[Proof of Theorem 1.4]
As before, fix a ball $B=B(x_0,r_B)\subseteq\mathbb R^n$ and split $f_i$ into $f_i=f^0_i+f^{\infty}_i$, where $f^0_i=f_i\chi_{2B}$, $i=1,\ldots,m$. Then for each fixed $\lambda>0$, we can write
\begin{equation*}
\begin{split}
&\nu_{\vec{w}}^q\big(\big\{x\in B:\big|I_{\alpha}(f_1,\ldots,f_m)\big|>\lambda\big\}\big)^{1/q}\\
\le&\nu_{\vec{w}}^q\big(\big\{x\in B:\big|I_{\alpha}(f^0_1,\ldots,f^0_m)\big|>\lambda/{2^m}\big\}\big)^{1/q}
+\sum\nolimits^\prime \nu_{\vec{w}}^q\big(\big\{x\in B:\big|I_{\alpha}(f^{\alpha_1}_1,\ldots,f^{\alpha_m}_m)\big|>\lambda/{2^m}\big\}\big)^{1/q}\\
=&J^0_*+\sum\nolimits^\prime J^{\alpha_1,\ldots,\alpha_m}_*,
\end{split}
\end{equation*}
where each term of $\sum^\prime$ contains at least one $\alpha_i\neq0$. By Lemma 2.5 again, we know that $(\nu_{\vec{w}})^q\in A_{mq}$ with $1<mq<\infty$. Using Theorem D, Lemma 4.1 and Lemma 2.1, we have
\begin{equation*}
\begin{split}
J^0_*&\le\frac{C}{\lambda}\prod_{i=1}^m\left(\int_{2B}|f_i(x)|^{p_i}w_i(x)^{p_i}\,dx\right)^{1/{p_i}}\\
&\le \frac{C\cdot\prod_{i=1}^m w^{q_i}_i(2B)^{{\kappa q}/{pq_i}}}{\lambda}\prod_{i=1}^m\big\|f_i\big\|_{L^{p_i,{\kappa p_iq}/{pq_i}}(w^{p_i}_i,w^{q_i}_i)}\\
&\le \frac{C\cdot\nu_{\vec{w}}^q(2B)^{\kappa/p}}{\lambda}\prod_{i=1}^m\big\|f_i\big\|_{L^{p_i,{\kappa p_iq}/{pq_i}}(w^{p_i}_i,w^{q_i}_i)}\\
&\le \frac{C\cdot\nu_{\vec{w}}^q(B)^{\kappa/p}}{\lambda}\prod_{i=1}^m\big\|f_i\big\|_{L^{p_i,{\kappa p_iq}/{pq_i}}(w^{p_i}_i,w^{q_i}_i)}.
\end{split}
\end{equation*}
In the proof of Theorem 1.3, we have already proved the following pointwise estimate (see (4.1) and (4.3)).
\begin{equation}
\big|I_{\alpha}(f^{\alpha_1}_1,\ldots,f^{\alpha_m}_m)(x)\big|\le C\sum_{j=1}^\infty\prod_{i=1}^m\frac{1}{|2^{j+1}B|^{1-\alpha/{mn}}}\int_{2^{j+1}B}\big|f_i(y_i)\big|\,dy_i.
\end{equation}
Without loss of generality, we may assume that $p_1=\cdots=p_{\ell}=\min\{p_1,\ldots,p_m\}=1$, and $p_{\ell+1},\ldots,p_m>1$. By using H\"older's inequality, the multiple $A_{\vec{P},q}$ condition and Lemma 4.1, we obtain
\begin{equation*}
\begin{split}
\big|I_{\alpha}(f^{\alpha_1}_1,\ldots,f^{\alpha_m}_m)(x)\big|&\le
C\sum_{j=1}^\infty\prod_{i=1}^{\ell}\frac{1}{|2^{j+1}B|^{1-\alpha/{mn}}}\int_{2^{j+1}B}\big|f_i(y_i)\big|\,dy_i\\
&\times\prod_{i=\ell+1}^{m}\frac{1}{|2^{j+1}B|^{1-\alpha/{mn}}}\int_{2^{j+1}B}\big|f_i(y_i)\big|\,dy_i\\
&\le C\sum_{j=1}^\infty\prod_{i=1}^{\ell}
\frac{1}{|2^{j+1}B|^{1-\alpha/{mn}}}\int_{2^{j+1}B}\big|f_i(y_i)\big|w_i(y_i)\,dy_i
\left(\inf_{y_i\in 2^{j+1}B}w_i(y_i)\right)^{-1}\\
&\times\prod_{i=\ell+1}^{m}\frac{1}{|2^{j+1}B|^{1-\alpha/{mn}}}
\left(\int_{2^{j+1}B}\big|f_i(y_i)\big|^{p_i}w_i(y_i)^{p_i}\,dy_i\right)^{1/{p_i}}
\left(\int_{2^{j+1}B}w_i(y_i)^{-p'_i}\,dy_i\right)^{1/{p'_i}}\\
&\le C\prod_{i=1}^m \big\|f_i\big\|_{L^{p_i,{\kappa p_iq}/{pq_i}}(w^{p_i}_i,w^{q_i}_i)}\cdot
\sum_{j=1}^\infty\nu_{\vec{w}}^q\big(2^{j+1}B\big)^{\kappa/p-1/q}.
\end{split}
\end{equation*}
Note that $(\nu_{\vec{w}})^q\in A_{mq}$ with $1<mq<\infty$. Hence, it follows from the inequality (4.2) that for any $x\in B$,
\begin{align}
\big|I_{\alpha}(f^{\alpha_1}_1,\ldots,f^{\alpha_m}_m)(x)\big|
&= C\prod_{i=1}^m \big\|f_i\big\|_{L^{p_i,{\kappa p_iq}/{pq_i}}(w^{p_i}_i,w^{q_i}_i)}\cdot
\frac{1}{\nu_{\vec{w}}^q(B)^{1/q-\kappa/p}}
\sum_{j=1}^\infty\frac{\nu_{\vec{w}}^q(B)^{1/q-\kappa/p}}{\nu_{\vec{w}}^q(2^{j+1}B)^{1/q-\kappa/p}}\notag\\
&\le C\prod_{i=1}^m \big\|f_i\big\|_{L^{p_i,{\kappa p_iq}/{pq_i}}(w^{p_i}_i,w^{q_i}_i)}\cdot
\frac{1}{\nu_{\vec{w}}^q(B)^{1/q-\kappa/p}}\sum_{j=1}^\infty
\left(\frac{|B|}{|2^{j+1}B|}\right)^{\delta^\prime(1/q-\kappa/p)}\notag\\
&\le C\prod_{i=1}^m \big\|f_i\big\|_{L^{p_i,{\kappa p_iq}/{pq_i}}(w^{p_i}_i,w^{q_i}_i)}\cdot
\frac{1}{\nu_{\vec{w}}^q(B)^{1/q-\kappa/p}}.
\end{align}
If $\big\{x\in B:\big|I_{\alpha}(f^{\alpha_1}_1,\ldots,f^{\alpha_m}_m)(x)\big|>\lambda/{2^m}\big\}=\O$, then the inequality
\begin{equation*}
J^{\alpha_1,\ldots,\alpha_m}_*\le\frac{C\cdot\nu_{\vec{w}}^q(B)^{\kappa/p}}{\lambda}\prod_{i=1}^m \big\|f_i\big\|_{L^{p_i,{\kappa p_iq}/{pq_i}}(w^{p_i}_i,w^{q_i}_i)}
\end{equation*}
holds trivially. Now if instead we assume that $\big\{x\in B:\big|I_{\alpha}(f^{\alpha_1}_1,\ldots,f^{\alpha_m}_m)(x)\big|>\lambda/{2^m}\big\}\neq\O$, then by the pointwise inequality (4.5), we get
\begin{equation*}
\lambda<C\prod_{i=1}^m \big\|f_i\big\|_{L^{p_i,{\kappa p_iq}/{pq_i}}(w^{p_i}_i,w^{q_i}_i)}\cdot
\frac{1}{\nu_{\vec{w}}^q(B)^{1/q-\kappa/p}},
\end{equation*}
which in turn gives that
\begin{equation*}
\nu_{\vec{w}}^q(B)^{1/q}\le\frac{C\cdot\nu_{\vec{w}}^q(B)^{\kappa/p}}{\lambda}\prod_{i=1}^m \big\|f_i\big\|_{L^{p_i,{\kappa p_iq}/{pq_i}}(w^{p_i}_i,w^{q_i}_i)}.
\end{equation*}
Therefore
\begin{equation*}
J^{\alpha_1,\ldots,\alpha_m}_*\le\nu_{\vec{w}}^q(B)^{1/q}
\le\frac{C\cdot\nu_{\vec{w}}^q(B)^{\kappa/p}}{\lambda}\prod_{i=1}^m \big\|f_i\big\|_{L^{p_i,{\kappa p_iq}/{pq_i}}(w^{p_i}_i,w^{q_i}_i)}.
\end{equation*}
Collecting all the above estimates and then taking the supremum over all balls $B\subseteq\mathbb R^n$ and all $\lambda>0$, we conclude the proof of Theorem 1.4.
\end{proof}
By using H\"older's inequality, it is easy to verify that if $1\le p_i<q_i$, $1/q=\sum_{k=1}^m 1/{q_k}$ and each $w_i$ is in $A_{p_i,q_i}$, then we have
\begin{equation*}
\prod_{i=1}^m A_{p_i,q_i}\subset A_{\vec{P},q}.
\end{equation*}
and this inclusion is strict (see \cite{moen}).Also recall that $w\in A_{p,q}$ if and only if $w^q\in A_{1+q/{p'}}\subset A_\infty$(see \cite{muckenhoupt2}). Thus, as straightforward consequences of Theorems
1.3 and 1.4, we finally obtain the following

\begin{corollary}
Let $m\ge2$, $0<\alpha<mn$ and $I_\alpha$ be an $m$-linear fractional integral operator. If $p_1,\ldots,p_m\in(1,\infty)$, $1/p=\sum_{k=1}^m 1/{p_k}$, $1/{q_k}=1/{p_k}-\alpha/{mn}$ and $1/q=\sum_{k=1}^m 1/{q_k}=1/p-\alpha/n$, and $\vec{w}=(w_1,\ldots,w_m)\in \prod_{i=1}^m A_{p_i,q_i}$, then for any $0<\kappa<p/q$, there exists a constant $C>0$ independent of $\vec{f}=(f_1,\ldots,f_m)$ such that
\begin{equation*}
\big\|I_\alpha(\vec{f})\big\|_{L^{q,{\kappa q}/p}((\nu_{\vec{w}})^q)}\le C\prod_{i=1}^m
\big\|f_i\big\|_{L^{p_i,{\kappa p_iq}/{pq_i}}(w^{p_i}_i,w^{q_i}_i)},
\end{equation*}
where $\nu_{\vec{w}}=\prod_{i=1}^m w_i$.
\end{corollary}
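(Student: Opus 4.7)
The plan is to deduce this corollary directly from Theorem~1.3. All of the analytic content---the local/global decomposition, the multilinear size-kernel estimate, and the reverse-doubling bound coming from Lemma~2.3---is already contained in that theorem, so the task reduces to verifying the two hypotheses of Theorem~1.3 from the stronger assumption $\vec{w}\in\prod_{i=1}^m A_{p_i,q_i}$.

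First I would establish the inclusion $\prod_{i=1}^m A_{p_i,q_i}\subset A_{\vec{P},q}$ by H\"older's inequality. Since $\sum_{i=1}^m q/q_i=1$, H\"older with exponents $q_i/q$ gives $\left(\frac{1}{|B|}\int_B\prod_{i=1}^m w_i(x)^q\,dx\right)^{1/q}\le\prod_{i=1}^m\left(\frac{1}{|B|}\int_B w_i(x)^{q_i}\,dx\right)^{1/q_i}$; combined with the product structure already present in the second factor of the definition of $A_{\vec{P},q}$, this lets one multiply the individual $A_{p_i,q_i}$ bounds on each ball $B$ to recover an $A_{\vec{P},q}$ bound for $\vec{w}$.

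Next, the condition $w_i^{q_i}\in A_\infty$ for each $i$ follows from the classical equivalence $w\in A_{p,q}\iff w^q\in A_{1+q/p'}$ together with the containment $A_{1+q/p'}\subset A_\infty$; both facts are recorded in the passage immediately preceding the corollary.

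With both hypotheses of Theorem~1.3 in hand, that theorem applies and delivers the stated estimate, the constant $C$ depending on the individual $A_{p_i,q_i}$ constants of the weights $w_i$. There is no substantive obstacle here---the corollary is essentially a reformulation of Theorem~1.3 under the more classical, product-type weight hypothesis---so the proof is very short, consisting only of the two structural reductions described above followed by an invocation of Theorem~1.3.
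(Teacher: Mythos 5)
Your proposal is correct and follows exactly the route the paper takes: verify $\prod_{i=1}^m A_{p_i,q_i}\subset A_{\vec{P},q}$ by H\"older's inequality with exponents $q_i/q$ (using $\sum_i q/q_i=1$), obtain $w_i^{q_i}\in A_\infty$ from the classical equivalence $w\in A_{p,q}\iff w^q\in A_{1+q/p'}$, and then invoke Theorem~1.3. No discrepancies to report.
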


\begin{corollary}
Let $m\ge2$, $0<\alpha<mn$ and $I_\alpha$ be an $m$-linear fractional integral operator. If $p_1,\ldots,p_m\in[1,\infty)$, $\min\{p_1,\ldots,p_m\}=1$, $1/p=\sum_{k=1}^m 1/{p_k}$, $1/{q_k}=1/{p_k}-\alpha/{mn}$ and $1/q=\sum_{k=1}^m 1/{q_k}=1/p-\alpha/n$, and $\vec{w}=(w_1,\ldots,w_m)\in \prod_{i=1}^m A_{p_i,q_i}$, then for any $0<\kappa<p/q$, there exists a constant $C>0$ independent of $\vec{f}=(f_1,\ldots,f_m)$ such that
\begin{equation*}
\big\|I_\alpha(\vec{f})\big\|_{WL^{q,{\kappa q}/p}((\nu_{\vec{w}})^q)}\le C\prod_{i=1}^m
\big\|f_i\big\|_{L^{p_i,{\kappa p_iq}/{pq_i}}(w^{p_i}_i,w^{q_i}_i)},
\end{equation*}
where $\nu_{\vec{w}}=\prod_{i=1}^m w_i$.
\end{corollary}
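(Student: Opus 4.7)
The plan is to deduce this weak-type inequality as an immediate consequence of Theorem 1.4, by verifying that the stronger hypothesis $\vec{w}\in\prod_{i=1}^m A_{p_i,q_i}$ implies the two conditions required there, namely $\vec{w}\in A_{\vec{P},q}$ and $w_i^{q_i}\in A_\infty$ for every $i=1,\ldots,m$.

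For the joint multiple-weight condition, I would multiply the $m$ defining inequalities of $A_{p_i,q_i}$ to get
$$\prod_{i=1}^m\Bigl(\tfrac{1}{|B|}\int_B w_i^{q_i}\Bigr)^{1/q_i}\prod_{i=1}^m\Bigl(\tfrac{1}{|B|}\int_B w_i^{-p_i'}\Bigr)^{1/p_i'}\le C.$$
Since $\sum_{i=1}^m q/q_i=1$ by the hypothesis $1/q=\sum 1/q_i$, H\"older's inequality with conjugate exponents $q_i/q$ yields
$$\int_B\nu_{\vec{w}}^q=\int_B\prod_{i=1}^m\bigl(w_i^{q_i}\bigr)^{q/q_i}\le\prod_{i=1}^m\Bigl(\int_B w_i^{q_i}\Bigr)^{q/q_i},$$
which turns the first factor above into a bound for $\bigl(|B|^{-1}\int_B\nu_{\vec{w}}^q\bigr)^{1/q}$ and so gives $\vec{w}\in A_{\vec{P},q}$. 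In the endpoint case $p_i=1$ one uses instead $(\inf_B w_i)^{-1}$, which is consistent with the $p_i=1$ convention in the $A_{\vec{P},q}$ definition, so the Hölder argument carries through unchanged. This is exactly the inclusion $\prod_{i=1}^m A_{p_i,q_i}\subset A_{\vec{P},q}$ that the paper already records just above the corollary.

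For the componentwise $A_\infty$ regularity, I would invoke the classical equivalence $w\in A_{p,q}\Longleftrightarrow w^q\in A_{1+q/p'}$ of Muckenhoupt and Wheeden, cited immediately before the corollary. Since $A_{1+q_i/p_i'}\subset A_\infty$ (and at the endpoint $p_i=1$ this reduces to $w_i^{q_i}\in A_1\subset A_\infty$), the required hypothesis on the individual weights is immediate.

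With both hypotheses of Theorem 1.4 verified, the desired weak-type bound follows at once by direct application of that theorem. The main obstacle here is essentially nil: the derivation amounts only to routine H\"older arithmetic together with two standard inclusions already stated in the text, and all of the genuine analytic work (the splitting $f_i=f_i^0+f_i^\infty$, the size-condition estimate, the iteration on dyadic annuli, and the use of Lemma 2.3 and Lemma 4.1) has already been carried out in the proof of Theorem 1.4 itself.
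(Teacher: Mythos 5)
Your proposal is correct and follows exactly the route the paper itself takes: the text immediately preceding the corollary records the inclusion $\prod_{i=1}^m A_{p_i,q_i}\subset A_{\vec{P},q}$ and the Muckenhoupt--Wheeden equivalence $w\in A_{p,q}\Leftrightarrow w^q\in A_{1+q/{p'}}\subset A_\infty$, and then deduces the corollary directly from Theorem 1.4. Your write-up merely makes the H\"older step and the $p_i=1$ endpoint convention explicit, which the paper leaves to the reader.
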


\end{document}